\numberwithin{equation}{section}
\theoremstyle{plain}
\newtheorem*{theorem*}{theorem}
\newtheorem*{lemma*}{Lemma}
\newtheorem{maintheorem}{Theorem}
\newtheorem{mainprop}[maintheorem]{Proposition}
\theoremstyle{definition}
\newtheorem{definition*}{Definition}
\newcommand{\bl}[2]{\left\langle #1,#2\right\rangle}
\DeclareMathOperator{\coh}{coh}
\renewcommand{\d}[1]{\mathbb{#1}}
\newcommand{\define}{\stackrel{\operatorname{def}}{=}}
\newcommand{\ds}{\oplus}
\newcommand{\epi}{\xymatrix{{}\ar@{->>}[r]&{}}}
\DeclareMathOperator{\End}{End}
\newcommand{\fun}{\mapsto}
\DeclareMathOperator{\Hom}{Hom}
\DeclareMathOperator{\im}{im}
\def\mod{\operatorname{mod}}
\newcommand{\mor}{\longrightarrow}
\newcommand{\T}{\mathcal{T}}
\renewcommand{\O}{\mathcal{O}}
\DeclareMathOperator{\Num}{Num}
\DeclareMathOperator{\Pic}{Pic}
\renewcommand{\r}[1]{\mathcal{#1}}
\DeclareMathOperator{\rk}{rk}
\renewcommand{\r}[1]{\mathcal{#1}}
\DeclareMathOperator{\num}{num}
\DeclareMathOperator{\rad}{rad}
\newcommand{\tr}{\otimes}
\newcommand{\Q}{\mathbb{Q}}
\newcommand{\Z}{\mathbb{Z}}
\newtheorem{lemma}{Lemma}[section]
\newtheorem{proposition}[lemma]{Proposition}
\newtheorem{lemmas}{Lemma}[subsection]
\newtheorem{propositions}[lemmas]{Proposition}
\newtheorem{corollarys}[lemmas]{Corollary}
\theoremstyle{definition}
\newtheorem{definition}[lemma]{Definition}
\newtheorem{definitions}[lemmas]{Definition}
\theoremstyle{remark}
\newtheorem{remarks}[lemmas]{Remark}
\author{Louis de Thanhoffer de Volcsey}
\email{louis.dethanhofferdevolcsey@utoronto.ca}
\address{Department of Computer and Mathematical Sciences, University of Toronto at Scarborough\\
Toronto, Ontario\\ 
Canada\\ 
M1C 1A4}
\author{Michel Van den Bergh}
\email{michel.vandenbergh@uhasselt.be}
\address{University of Hasselt\\Martelarenlaan 42\\3500 Hasselt\\Belgium}
\title{On an analogue of the Markov equation for exceptional collections of length 4}
\thanks{The first author is a  Postdoctoral Fellow at the UTSC}
\thanks{The second author is a director of research at the Research Foundation Flanders (FWO)}
\thanks{While working this paper the second author was supported by the FWO projects
1503512N: ``Non-commutative algebraic geometry'' and
G0D8616N:  ``Hochschild cohomology and deformation theory of triangulated categories''}
\subjclass{14A22,14C40,11D99}
\keywords{Markov equation, Noncommutative surfaces}
\begin{document}
\begin{abstract}
  We classify the solutions to a system of equations, introduced by
  Bondal, which encode numerical constraints on full exceptional
  collections of length 4 on surfaces.  The corresponding result for
  length 3
  is well-known and states that there is essentially one solution,  namely  the one corresponding to the standard exceptional collection on the surface $\d{P}^2$. This was essentially proven by Markov in 1879 (see \cite{Markov_1879}).

  It turns out that in the length 4 case, there is one special
  solution which corresponds to $\mathbb{P}^1\times\mathbb{P}^1$
  whereas the  other solutions are obtained from $\mathbb{P}^2$ by a procedure we call \emph{numerical blowup}. Among these solutions, three  are of geometric origin ($\mathbb{P}^2\cup \{\bullet\}$, $\mathbb{P}^1\times\mathbb{P}^1$ and the ordinary blowup of $\mathbb{P}^2$ at a point).
  The other solutions are parametrized by $\mathbb{N}$  and very likely
  do not correspond to commutative surfaces. However they can be
  realized as noncommutative surfaces, as was recently shown by Dennis
  Presotto and the first author in \cite{deThanhPres_2}.
\end{abstract}
\maketitle
%
\section{Introduction and statement of results}
\label{sec:introduction}
Below $k$ is an algebraically closed field. 
All objects and categories we consider are $k$-linear. 
For a general triangulated $\Hom$-finite category $\T$, 
 the Grothendieck group $K(\T)$ of $\T$ is
equipped with a bilinear form (the ``Euler form'') defined by the formula
$\bl{[F]}{[G]}:=\sum_i (-1)^i\dim_k \Hom_\T(F,G[i])
$. 
If $\T$ has a Serre functor $S$ in the sense of Bondal and Kapranov \cite{Bondal4} then this yields an automorphism $s$ on
$K(\T)$ satisfying the formula $\bl{v}{sw}=\bl{w}{v}$. In particular
the left and right radical of $\langle-,-\rangle$ coincide and we may define the numerical Grothendieck groupof $\T$ as
$K(\T)_{\num}\define K(\T)/\rad{\bl{-}{-}}$. 
If $\T=D^b(\coh(X))$ for a smooth projective  variety $X$ of dimension $d$ then
$K(X)_{\num}\define K(\T)_{\num}$ is a finitely generated  free abelian group. 
Moreover, the action of  
$(-1)^d s$ on $K(\T)$ and hence on $K(\T)_{\num}$ is unipotent \cite[Lemma 3.1]{Bondal_Polishchuk_93}.

A full exceptional collection $(E_i)_i$ in $\T$ defines a basis $(e_i)_i$ of $K(\T)$ for which the Gram matrix $M:=\langle e_i,e_j\rangle_{ij}$ is upper triangular with $1$'s on the diagonal.
It is clear that in this case we have $K(\T)=K(\T)_{\num}$. We will call an arbitrary basis $(e_i)_i$ of $K(\T)$ with such a Gram matrix \emph{exceptional}. The braid group acts by mutation on exceptional bases  and hence on the corresponding Gram matrices. This can be extended to an action of the \emph{signed} braid group in an obvious way.

\medskip

The previous discussion naturally leads to the problem of classfying finitely generated free abelian groups~$K$ 
equipped with a non-degenerate bilinear form $\langle-,-\rangle$\footnote{we refer to these as \emph{lattices}}, a corresponding Serre automorphism $s$ such that
that $\pm s$ acts unipotently and an exceptional basis $(e_i)_i$. It is this problem that we discuss
in this paper.  We will however restrict ourselves to the case where $s$ acts unipotently 
as our interest is numerical restrictions on surfaces.

In  \cite[Example 3.2]{Bondal_Polishchuk_93} (see Lemma \ref{suni1} below), it was shown that if $\rk K=3$  then the unipotency of $s$ implies that the coefficients of the Gram matrix
\[
M=\begin{bmatrix}
1&a&b\\
0&1&c\\
0&0&1
\end{bmatrix}
\]
must satisfy the \emph{Markov equation}
\begin{equation}
\label{eq:rk3}
a^2+b^2+c^2-abc=0
\end{equation}
It was shown by Markov  \cite{Markov_1879} that all solutions to this equation
may be obtained by a kind of mutation procedure starting from the 
basic solution $(3,3,3)$. This procedure turns out to correspond 
to the mutation of exceptional bases.
In this way one obtains  $K\cong K(\d{P}^2)$.\\[\medskipamount]

Similarly, if $\rk(K)=4$, we can write the Gram matrix as 
\begin{equation}
\label{eq:rk4m}
M=
\begin{bmatrix}
1&a&b&c\\
0&1&d&e\\
0&0&1&f\\
0&0&0&1
\end{bmatrix}
\end{equation}
In this case, the unipotency of $s$  yields a pair of Diophantine equations \cite{BondalSymplectic} (see Lemma \ref{suni2} below)
\begin{equation}
\label{eq:rk4}
\begin{cases}
acdf-abd-ace-bcf-def+a^2+b^2+c^2+d^2+e^2+f^2=0\\
af-be+cd=0
\end{cases}
\end{equation}
The following is our main result (see \S\ref{sec:rk4} below).
\begin{maintheorem} \label{mainth} Let $M$ be a solution of \eqref{eq:rk4}. Then under the action of the signed braid group
 $M$ is equivalent to exactly
 one of the following solutions
\[
\begin{bmatrix}
1&2&2&4\\
0&1&0&2\\
0&0&1&2\\
0&0&0&1
\end{bmatrix}
\qquad\qquad\qquad
\begin{bmatrix}
1&n&2n&n\\
0&1&3&3\\
0&0&1&3\\
0&0&0&1
\end{bmatrix}  \qquad \text{for $n\in \mathbb{N}$}
\]
\end{maintheorem}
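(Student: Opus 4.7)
My plan is to mimic Markov's classical descent in rank $3$, but with more elaborate bookkeeping. First I would unpack the two equations in \eqref{eq:rk4} conceptually: the second equation $af-be+cd=0$ is exactly the Pfaffian of the antisymmetric matrix $M-M^T$, so it is equivalent to $\rk(M-M^T)\le 2$, a condition invariant under any change of basis and in particular under the signed braid group. The first equation is the unipotency condition on the Serre operator $s=\pm M^{-T}M$, which is basis-intrinsic. Having checked preservation of \eqref{eq:rk4} under mutations, I would derive explicit transformation formulas for the six coordinates $(a,b,c,d,e,f)$ under the three standard braid generators and the four sign changes; these formulas drive the rest.

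Next I would introduce a height function $h(M):=a^2+b^2+c^2+d^2+e^2+f^2$ and run a descent. After using sign changes to render the entries as non-negative as possible, for each non-target configuration I would exhibit a mutation (or a short composite of mutations and sign changes) that strictly decreases $h$. Since $h$ takes values in $\mathbb{Z}_{\ge 0}$, every orbit then contains an $h$-minimal representative, which by construction must sit in the list.

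The third step is to classify these minimal solutions directly. Combining \eqref{eq:rk4} with the non-descent constraints, I expect the case split to turn on whether some principal $3\times 3$ sub-Gram matrix is the Markov triple $(3,3,3)$. In that case the corresponding top row $(a,b,c)$ is constrained by the Pfaffian relation $3a-3b+3c=0$ and by the residual unipotency equation; together these should force $(a,b,c)=(n,2n,n)$ for some $n\in\mathbb{N}$, recovering the second family. In the complementary case the Pfaffian degenerates differently and pins down the $\mathbb{P}^1\times\mathbb{P}^1$ matrix uniquely. To finish, I would separate the two canonical forms by a braid-invariant such as the characteristic polynomial of $s$ (which distinguishes the $\mathbb{P}^1\times\mathbb{P}^1$ solution from the $n$-family) and extract $n$ within the $n$-family by a $\gcd$-type invariant of the lattice.

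The hard part will be the descent in step two: with six variables, three braid generators together with their inverses, and sign changes, the case analysis is significantly more intricate than the one-line Markov argument. Mutations do not preserve non-negativity of the entries in general (this is exactly why the \emph{signed} braid group appears), and $h$ may increase under some individual mutations while decreasing only under carefully chosen composites. Finding the correct packaged reduction move together with the correct height function, so that the descent is strictly monotone outside the two canonical families, is the essential content of the proof.
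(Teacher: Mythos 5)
Your proposal is a plan whose essential step is missing, and you say so yourself: the descent is ``the essential content of the proof,'' and you do not supply it. Moreover, the height function you propose, $h(M)=a^2+\cdots+f^2$ on the Gram entries, is almost certainly the wrong one. The paper's descent is \emph{not} run on the Gram entries but on $\mathcal{M}(E)=\sum_i|r(e_i)|$, where $r$ is a rank function defined via the Jordan filtration of the Serre operator $s$ (the ``numerical codimension filtration''). The reason this works is structural: the classes $\eta_i=e_i^1/r(e_i)$ live in a rank-$2$ quotient $\Num(K)$ carrying a nondegenerate symmetric intersection form, the differences $T_i=\eta_{i+1}-\eta_i$ satisfy $(T_i,T_{i+2})=0$, $(T_i,T_{i+1})>0$ and sum to $-\omega$, and an elementary plane-geometry lemma then forces the form to be indefinite and produces an exceptional pair satisfying the hypotheses of a Rudakov-type reduction lemma, which strictly decreases $\mathcal{M}$ (or else forces $r_1=0$ or $\langle e_1,e_2\rangle=\langle e_3,e_4\rangle=2$). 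None of this structure is visible at the level of the six Gram entries, and there is no reason to expect that some mutation always decreases $a^2+\cdots+f^2$ outside the target list; indeed the two minimal cases that survive the paper's descent still require substantial ad hoc work (infinite $t$-parameter families of Gram matrices within a single orbit, identified via relations like $(\epsilon_1\sigma_1)(M_t)=M_{t+2}$). Your correct observation that $af-be+cd$ is the Pfaffian of $M-M^{\mathrm{t}}$, hence that the second equation says $\rk(s-1)\le 2$, is exactly the entry point to this geometric machinery — but you stop before building it. You would also need to treat separately the degenerate case $(s-1)^2=0$ (which yields the $n=3$ solution), since the rank/intersection-form apparatus requires $(s-1)^2\neq 0$.

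There is also a concrete error in your final step: the characteristic polynomial of $s$ cannot distinguish any two solutions, because every solution of \eqref{eq:rk4} has $s$ unipotent by construction, so $\chi(s)(t)=(t-1)^4$ throughout. The paper instead separates orbits by the degree $\delta(K)=(\omega,\omega)$, which equals $8$ for the $\mathbb{P}^1\times\mathbb{P}^1$ lattice and $9-n^2$ for the $n$-th member of the second family; this distinguishes everything except the quadric from the $n=1$ case (both of degree $8$), which are then told apart by whether $s\equiv 1\pmod 2$. A ``$\gcd$-type invariant'' is not needed once $\delta$ is available, but some genuinely braid-invariant quantity beyond the characteristic polynomial is indispensable, and your proposal does not identify one.
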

The first solution corresponds to $\mathbb{P}^1\times\mathbb{P}^1$ with
its standard exceptional collection \[\bigg(\mathcal{O}(0,0),\mathcal{O}(1,0),
\mathcal{O}(0,1),\mathcal{O}(1,1)\bigg).\]

For $n=0,1$ the solutions in the second family correspond respectively to
$\d{P}^2\cup \{\bullet\}$ and the first
Hirzebruch surface $\d{F}_1$. For other values of $n$ it is easy
to see that the solutions cannot be realized by a full exceptional sequence on a rational surface and in fact presumably cannot be realized on any smooth projective surface. However they
do arise as Grothendieck groups of \emph{noncommutative surfaces} \cite{DennisPresottoInpreparation}.
Moreover for $n=2$ a mutation 
equivalent solution (see \S\ref{eq:single} below) given by
\[
\begin{bmatrix}
1&2&1&5\\
0&1&0&4\\
0&0&1&2\\
0&0&0&1
\end{bmatrix}
\]
was realized as a different
noncommutative surface in \cite{deThanhPres_2}.

\medskip

Our proof of Theorem \ref{mainth} is inspired by Rudakov's 
approach \cite{Rudakov} to the classification of full exceptional collections on quadrics. To this end we discuss some algebro-geometric concepts in the context of triples $(K,\langle-,-\rangle,s)$. We hope this will be of independent interest. 
Our main source of inspiration 
is the following result. 
\begin{mainprop}(Proposition \ref{lem:rank} below, see also \cite{BelmansRaedschelders}) \label{prop:mainprop}
Let $X$ be a  smooth projective surface. Then the action of $s$ on $K(X)_{\num}$ satisfies 
$
\rk(s-1)\le 2
$.
\end{mainprop}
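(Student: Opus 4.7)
The plan is to pass to rational coefficients, transport the question from $K(X)_{\num}$ to the Chow ring via the Chern character, and then finish by a direct computation exploiting that $\dim X = 2$. Since $K(X)_{\num}$ is free abelian and $s-1$ is $\mathbb{Z}$-linear, its rank agrees with that of $(s-1)\otimes \mathbb{Q}$, so working rationally throughout is harmless.

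The first step is to produce the identification
\[
\ch\colon K(X)_{\num,\mathbb{Q}} \;\xrightarrow{\sim}\; \CH^*_{\num}(X)_{\mathbb{Q}} \;=\; \mathbb{Q}\oplus \NS(X)_{\mathbb{Q}}\oplus \mathbb{Q}.
\]
The Chern character is a ring isomorphism $K(X)\otimes\mathbb{Q}\to\CH^*(X)_{\mathbb{Q}}$; by Grothendieck--Riemann--Roch the Euler form on $K$-theory translates into the intersection pairing on $\CH^*$ twisted by the Todd class of $X$. Since the Todd class is a unit of the Chow ring, it does not affect which classes lie in the radical, so $\ch$ descends to an isomorphism of numerical quotients, and the right hand side takes the displayed form on a smooth projective surface.

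The second step is to recognize $s-1$ on the Chow side. The Serre functor of a surface is $-\otimes \omega_X[2]$, so the induced automorphism on $K$-theory is simply $s([F])=[F\otimes \omega_X]$ (the shift contributes $(-1)^2$). Because $\ch$ is a ring map and $\omega_X$ is a line bundle, this corresponds to multiplication by $\ch(\omega_X)=e^{K_X}$ in the Chow ring. Using that $\dim X = 2$ forces $K_X^i=0$ for $i\ge 3$, one gets
\[
s-1 \;\longleftrightarrow\; e^{K_X}-1 \;=\; K_X + \tfrac12 K_X^2.
\]

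For the final step I apply this operator to a general element $(r,D,n)\in\mathbb{Q}\oplus\NS(X)_{\mathbb{Q}}\oplus\mathbb{Q}$. Discarding the contributions in degree $\ge 3$, it sends $(r,D,n)\mapsto(0,\, rK_X,\, \tfrac12 rK_X^2 + K_X\cdot D)$. The image therefore lies in $0\oplus \mathbb{Q}K_X\oplus \mathbb{Q}$, which has $\mathbb{Q}$-dimension at most $2$, yielding $\rk(s-1)\le 2$. The one non-automatic input is the first step---checking that $\ch$ descends to an isomorphism of numerical quotients---but once this is in place the rest is routine bookkeeping in the Chow ring of a surface.
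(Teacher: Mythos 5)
Your argument is correct, and the one step you flag as non-automatic does go through: since $\ch\colon K(X)_{\Q}\to\CH^*(X)_{\Q}$ is a ring isomorphism, the Todd class is a unit and $\alpha\mapsto\alpha^\vee$ is bijective, so Grothendieck--Riemann--Roch identifies the radical of the Euler form with the classes whose Chern character is numerically trivial, and $\ch$ descends to $K(X)_{\num,\Q}\cong\Q\oplus\NS(X)_{\Q}\oplus\Q$ as you claim. The route differs in presentation from the paper's. The paper argues abstractly with the codimension filtration on $V=K(X)_{\num,\Q}$: citing from the proof of Proposition \ref{lem:unipotent} that $(s-1)F^iV\subset F^{i+1}V$, it chooses a lift $o$ of a generator of the one-dimensional quotient $V/F^1V$ and observes that $(s-1)V=\Q(so-o)+(s-1)(F^1V)\subset\Q(so-o)+F^2V$, which has dimension at most $2$. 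Your computation is exactly this argument made explicit in coordinates: $so-o$ becomes $K_X+\tfrac12K_X^2$, and the inclusion $(s-1)F^1V\subset F^2V$ becomes the observation that multiplying a class of degree $\ge 1$ by $e^{K_X}-1$ lands in degree $\ge 2$. What your version buys is self-containedness (you reprove the filtration-shift property via GRR rather than citing it) together with an explicit description of the image as $\Q K_X\oplus\Q\,[\mathrm{pt}]$; what it gives up is generality, since the paper's three-line argument applies verbatim to any Serre lattice equipped with a codimension filtration whose outer graded pieces are one-dimensional --- precisely the axiomatic setting the paper goes on to develop in Section \ref{sec:numer}.
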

Assume $\rk K$ is arbitrary. We say that $K$ is of surface* type if $\rk(s-1)\le 2$ and $(s-1)^2\neq 0$. In that case we 
define a 3-step filtration $K=F^0K\supset F^1K\supset F^2K\supset 0$
with $F^1K=(\ker(s-1)^2)_{\Q}\cap K$, $F^2K=(\im(s-1)^2)_{\Q}\cap K$
which
serves as a substitute for the codimension
filtration on $K(X)$.  Then $\Num(K)\define F^1K/F^2K$ is a free
abelian group which is a substitute for the numerical Picard group
of~$X$. In particular $-\langle-,-\rangle$ restricts to a symmetric
nondegenerate bilinear ``intersection form'' $(-,-)$ on $\Num(K)$.
Moreover $\Num(K)$ contains a distinguished element $\omega$, well defined up to sign, which serves as a substitute for
the canonical class. Using these ingredients we
develop some rudimentary numerical algebraic geometry for~$K$. In particular
we define a numerical notion of blowup which is more general than
the geometric notion. The lattices in the second family in
the statement of Theorem \ref{mainth} are obtained by numerical
blowup of $K(\mathbb{P}^2)$.

The quantity $\delta(K)\define (\omega,\omega)$ is an invariant of $K$ which
we call the degree of $K$. One
computes that the lattices appearing in Theorem \ref{mainth} have
$\delta(K)$ equal to $8$ resp. $9-n^2$ (see \S\ref{sec:blowupP2} below).

\medskip 

We believe it would be interesting to extend the results in this
paper to higher rank lattices. It seems likely that the methods in \cite{Perling}
would be helpful here. Note however that at least in the rank 4 case
we could avoid preimposing the  rationality conditions
which are used in \cite{Perling}.
\section{Preliminaries}\label{sec:geometricconditions}
Below a ``variety'' is automatically connected. The same goes for a ``surface''.
Let $X$ be a smooth projective variety of dimension $d$ over $k$. 
We recycle the associated notation $K(X),s$ and $\langle-,-\rangle$ introduced in the beginning of 
the introduction.
The following property of $s$ was already mentioned:
\begin{proposition} \cite[Lemma 3.1]{Bondal_Polishchuk_93} \label{lem:unipotent}  The
action of $s$ on $K(X)$ satisfies \[((-1)^ds-1)^{d+1}= 0\].
\end{proposition}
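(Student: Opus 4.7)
The plan is to identify $(-1)^d s - 1$ with multiplication by $[\omega_X]-[\O_X]$ in the Grothendieck ring $K(X)$, and then prove the nilpotence via a filtration argument.

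First, on $D^b(\coh X)$ the Serre functor (as in Bondal--Kapranov) is $S = - \otimes \omega_X[d]$, so $s$ acts on $K(X)$ by $s(\alpha) = (-1)^d[\omega_X]\cdot \alpha$, using the ring structure coming from tensor product. Consequently $((-1)^d s - 1)(\alpha) = ([\omega_X]-[\O_X])\cdot \alpha$, and the proposition reduces to showing $([\omega_X]-[\O_X])^{d+1} = 0$ in $K(X)$.

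For this I would invoke a multiplicative decreasing filtration $F^\bullet K(X)$ with $F^0 = K(X)$, $F^{d+1} = 0$ and in which $[\omega_X] - [\O_X]$ lies in $F^1$. The $\gamma$-filtration of SGA 6 works: it satisfies $F^p K(X)\cdot F^q K(X)\subseteq F^{p+q}K(X)$ and vanishes in degrees exceeding $\dim X = d$. That $[L]-[\O_X] \in F^1$ for a line bundle $L$ is classical; concretely, write $L = L_1 \otimes L_2^{-1}$ with $L_i$ very ample and use that, for a regular section of $L_i$ vanishing on a divisor $D_i$, the exact sequence $0 \to \O_X \to L_i \to L_i|_{D_i} \to 0$ exhibits $[L_i]-[\O_X] = [L_i|_{D_i}]$ as supported in codimension $1$. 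Applying this to $\omega_X$ and multiplying $d+1$ times places $([\omega_X]-[\O_X])^{d+1}$ in $F^{d+1}K(X) = 0$.

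The main subtlety is justifying the integral multiplicativity of the filtration. The cleanest route, at the cost of tensoring with $\Q$, is the Chern character $\ch : K(X)\otimes \Q \to A^*(X)\otimes \Q$: one has $\ch([\omega_X] - [\O_X]) = \exp(c_1(\omega_X)) - 1$, which lies in $\bigoplus_{i \ge 1} A^i(X) \otimes \Q$, and this ring is nilpotent of order $d+1$ since $A^{>d}(X) = 0$. This rational statement is already sufficient for the applications in the paper, where one immediately passes to the torsion-free quotient $K(X)_{\num}$ on which the two filtrations agree.
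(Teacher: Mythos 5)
Your argument is essentially the paper's: the proof sketched there likewise takes the codimension filtration $F^{\bullet}K(X)$ with $F^{d+1}K(X)=0$ and shows that $(-1)^d s-1$, i.e.\ multiplication by $[\omega_X]-[\O_X]$, shifts it by one --- and for that step one only needs the dévissage fact that $([L]-[\O_X])\cdot F^iK(X)\subseteq F^{i+1}K(X)$ for a single line bundle $L$, not full integral multiplicativity, so the detour through the $\gamma$-filtration is a safe but unnecessary precaution. The only caveat is that your final fallback via the Chern character over $\Q$ proves the identity only modulo torsion, which is weaker than the stated claim on $K(X)$ itself, though (as you note) it suffices for the paper's applications to $K(X)_{\num}$.
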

\begin{proof}
The idea of the proof goes as follows: let $(F^iK(X))_i$ be the codimension filtration. Then $F^{d+1}K(X)=0$ and one shows that $((-1)^ds-1)F^iK(X)\subset F^{i+1}K(X)$. 
\end{proof}
From the proof it follows in particular that $\im((-1)^ds-1)^d\subset F^d K(X)$. Note that there are canonical maps $\rk: K(X)\rightarrow\mathbb{Z}$, $\int:F^d K(X)\rightarrow \mathbb{Z}$. For use below we record the following formula:
\begin{lemma} \label{lem:dpart}
Let $\r{E}\in K(X)$.  Then
\[
\int ((-1)^d s-1)^d(\r{E})= \rk(\r{E}) \int c_1(\omega_X)^d
\]
\end{lemma}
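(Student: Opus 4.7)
The plan is to reduce the identity to a calculation in the Chow ring via the Chern character. Recall that the Serre functor on $D^b(\coh(X))$ is $-\otimes\omega_X[d]$, so on $K(X)$ the operator $(-1)^d s$ is simply tensor product with $\omega_X$, and hence $(-1)^d s - 1$ sends $[\r{F}]$ to $[\r{F}\otimes\omega_X]-[\r{F}]$. Under the Chern character $\ch:K(X)\to\CH^*(X)_\Q$, tensoring with $\omega_X$ corresponds to multiplication by $e^{c_1(\omega_X)}$, so applying $((-1)^d s-1)^d$ translates into multiplication by $(e^{c_1(\omega_X)}-1)^d$.

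A dimension count now does essentially all of the work. Since every term of $e^{c_1(\omega_X)}-1$ lives in codimension $\ge 1$, the $d$-fold product $(e^{c_1(\omega_X)}-1)^d$ lives in codimension $\ge d$; but $\dim X = d$, so only the leading term $c_1(\omega_X)^d$ survives. For the same reason, multiplying $\ch(\r{E}) = \rk(\r{E}) + c_1(\r{E}) + \cdots$ by $c_1(\omega_X)^d$ annihilates every positive-codimension piece of $\ch(\r{E})$, leaving $\rk(\r{E})\,c_1(\omega_X)^d$.

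The only step that is not purely formal is the compatibility of the map $\int:F^dK(X)\to\Z$ with the degree of the codimension-$d$ part of $\ch$; this is needed because the statement applies $\int$ to an element of $K(X)$, using the remark preceding the lemma that $\im((-1)^d s-1)^d\subset F^d K(X)$. This compatibility is a degenerate instance of Grothendieck--Riemann--Roch: on $F^d K(X)$ the Todd class of $X$ contributes only through its codimension-$0$ part and hence drops out, so $\int[\r{F}] = \deg\ch_d(\r{F})$ for $[\r{F}]\in F^d K(X)$. Combining this with the preceding computation yields the claimed identity. No serious obstacle appears — the only real bookkeeping is keeping track of codimensions in the Chow ring.
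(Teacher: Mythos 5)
Your proof is correct and takes essentially the same route as the paper, whose entire proof is the remark that the identity is ``a straightforward application of the Grothendieck Riemann-Roch theorem.'' You have simply made explicit the Chern-character and codimension bookkeeping (identifying $(-1)^d s-1$ with multiplication by $[\omega_X]-[\mathcal{O}_X]$ and noting that only the codimension-$d$ term survives) that the paper leaves implicit.
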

\begin{proof} This is a straightforward application of the Grothendieck Riemann-Roch theorem.
\end{proof}
It will be convenient to put $\delta(X)\define c_1(\omega_X)^d$. We will refer to $\delta(X)$ as the \emph{degree} of $X$. 
We now give the proof of Proposition \ref{prop:mainprop}
\begin{proposition}\label{lem:rank} Assume that  $X$ be a smooth projective surface. Then the action of $s$ on 
$K(X)_\text{num}$ satisfies
$\rk(s-1)\le 2$.  
\end{proposition}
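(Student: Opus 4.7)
The bound $((-1)^d s-1)^{d+1}=0$ from Proposition \ref{lem:unipotent} alone is not enough: for $d=2$ it only gives $(s-1)^3=0$, which in principle permits three Jordan blocks of size $2$ and hence a rank as large as $3$. To rule this out, the plan is to use the sharper information actually produced in the proof of that proposition, namely $((-1)^ds-1)F^iK(X)\subseteq F^{i+1}K(X)$; for $d=2$ this reads $(s-1)F^iK(X)\subseteq F^{i+1}K(X)$.

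First I would descend the picture to $V:=K(X)_{\num}$. The Serre identity $\langle v,sw\rangle=\langle w,v\rangle$ makes $s$ an isometry of $\langle-,-\rangle$, so $s$ preserves $\rad\langle-,-\rangle$, and both $s$ and the codimension filtration pass to the quotient. Writing $V^i$ for the induced filtration, I still have $V\supseteq V^1\supseteq V^2\supseteq 0$ with $(s-1)V^i\subseteq V^{i+1}$. I would then pin down the ranks of the outer pieces: the rank morphism yields a surjection $V/V^1\twoheadrightarrow\Z$, while the fact that the numerical class of a $0$-cycle on a connected projective surface is determined by its degree shows that $V^2$ has rank at most $1$.

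The conclusion is then a one-line rank count. Choose $e_0\in V$ with $\rk(e_0)=1$; then any $v\in V$ decomposes as $v=\rk(v)\,e_0+v'$ with $v'\in V^1$, so
\[
(s-1)v\;=\;\rk(v)\,(s-1)e_0\;+\;(s-1)v'\;\in\;\Z\cdot(s-1)e_0\;+\;V^2.
\]
The right-hand side is a sum of two cyclic groups, and thus has total rank at most $2$, yielding $\rk(s-1)\le 2$.

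The step I expect to require the most care is the rank-one bound on $V^2$: this is the one place where the hypothesis ``surface'' is genuinely used, beyond the formal identity $(s-1)^3=0$. Everything else is a mechanical consequence of the filtration-shifting property of $s-1$.
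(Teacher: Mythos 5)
Your argument is correct and is essentially the paper's own proof: the paper likewise passes to the induced codimension filtration on $K(X)_{\num,\Q}$ with $\dim V/F^1V=\dim F^2V=1$, picks a representative $o$ of $V/F^1V$, and observes $(s-1)(V)\subset k(so-o)+F^2V$. Your extra care about descending the filtration to the numerical quotient and bounding the rank of $V^2$ by the degree of $0$-cycles just makes explicit what the paper leaves implicit.
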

\begin{proof}
This result appeared in the first author's Ph.D thesis where it was proven through an argument using cohomology. After learning this result the authors of
\cite{BelmansRaedschelders} found an independent proof, which appeared in loc.\ cit.\ Here we give yet another proof.\\

Let $V\define K(X)_{\num,\Q}$. The codimension filtration on $V$ satisfies $\dim V/F^1V=1$ and  $\dim F^2 V=1$. Choose $o\in V$,
a representative for $V/F^1V$. Then 
\[
(s-1)(V)=k(so-o)+(s-1)(F^1V)\subset k(so-o)+F^2V
\] 
This proves that indeed $\dim (s-1)(V)\le 2$.
 \end{proof}
We recal the following result on the relation between
the Euler form and the intersection form.
\begin{lemma}\label{lem:eulerform-picardform}
Let $X$ be a  smooth  projective surface.
For line bundles $\r{L}$ and $\r{L}'$ on $X$, we have
\begin{equation}
\label{eq:standard2}
c_1(\r{L})\cdot c_1(\r{L}')=-\bl{[\r{L}]-[\r{O}_X]}{[\r{L'}]-[\r{O}_X]}
\end{equation}
where the first product is the intersection pairing on the Picard group.
\end{lemma}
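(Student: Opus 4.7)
The plan is to reduce the identity to a direct Chern character computation via the Hirzebruch--Riemann--Roch theorem. Since $\mathcal{L}$ and $\mathcal{L}'$ are locally free, one has $\langle [\mathcal{L}],[\mathcal{L}']\rangle = \chi(\mathcal{L}^{-1}\otimes\mathcal{L}')$, and by bilinearity
\[
\bl{[\mathcal{L}]-[\mathcal{O}_X]}{[\mathcal{L}']-[\mathcal{O}_X]}
= \chi(\mathcal{L}^{-1}\otimes\mathcal{L}') - \chi(\mathcal{L}^{-1}) - \chi(\mathcal{L}') + \chi(\mathcal{O}_X).
\]
By HRR, each Euler characteristic on the right is $\int_X \ch(-)\cdot \td(X)$, so the quantity equals $\int_X \alpha\cdot\td(X)$ where
\[
\alpha \;=\; \ch(\mathcal{L}^{-1})\ch(\mathcal{L}') - \ch(\mathcal{L}^{-1}) - \ch(\mathcal{L}') + 1.
\]

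Next I would compute $\alpha$ degree by degree, using $\ch(\mathcal{L}) = 1 + c_1(\mathcal{L}) + \tfrac{1}{2}c_1(\mathcal{L})^2$ on the surface. The degree $0$ part of $\alpha$ is $1-1-1+1=0$, and the degree $1$ part is $(c_1(\mathcal{L}')-c_1(\mathcal{L})) - (-c_1(\mathcal{L})) - c_1(\mathcal{L}') = 0$. Expanding the product, the degree $2$ part of $\ch(\mathcal{L}^{-1})\ch(\mathcal{L}')$ contributes $\tfrac{1}{2}c_1(\mathcal{L})^2 - c_1(\mathcal{L})c_1(\mathcal{L}') + \tfrac{1}{2}c_1(\mathcal{L}')^2$, from which the individual degree $2$ contributions of $\ch(\mathcal{L}^{-1})$ and $\ch(\mathcal{L}')$ cancel the squares, leaving
\[
\alpha \;=\; -\,c_1(\mathcal{L})\cdot c_1(\mathcal{L}') \quad \text{in top degree}.
\]

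Because $\alpha$ is concentrated in top (cohomological) degree on the surface, multiplication by $\td(X) = 1 + \tfrac{1}{2}c_1(T_X) + \cdots$ only picks up the degree $0$ component of $\td(X)$, namely $1$. Integrating therefore yields
\[
\bl{[\mathcal{L}]-[\mathcal{O}_X]}{[\mathcal{L}']-[\mathcal{O}_X]} \;=\; -\int_X c_1(\mathcal{L})\cdot c_1(\mathcal{L}'),
\]
which is the stated formula.

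There is no real obstacle: the only thing to be careful about is the appearance of $\mathcal{L}^{-1}$ (from $R\mathcal{H}om(\mathcal{L},\mathcal{L}')=\mathcal{L}^{-1}\otimes\mathcal{L}'$) in the HRR input, which is precisely what produces the correct minus sign on the right-hand side. The crucial structural observation powering the brevity of the argument is that the lower-degree parts of $\alpha$ vanish, so the Todd class plays no role beyond contributing its leading coefficient $1$.
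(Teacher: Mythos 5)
Your argument is correct: the bilinear expansion $\bl{[\r{L}]-[\O_X]}{[\r{L}']-[\O_X]}=\chi(\r{L}^{-1}\tr\r{L}')-\chi(\r{L}^{-1})-\chi(\r{L}')+\chi(\O_X)$ is valid, the class you integrate is $\alpha=(\ch(\r{L}^{-1})-1)(\ch(\r{L}')-1)$, whose degree $0$ and $1$ parts vanish and whose degree $2$ part is $-c_1(\r{L})\cdot c_1(\r{L}')$, so the Todd class indeed contributes only its constant term and Hirzebruch--Riemann--Roch gives exactly \eqref{eq:standard2}. This is, however, a genuinely different route from the paper's. The paper's (sketched) proof stays away from Riemann--Roch entirely: it observes that both sides of \eqref{eq:standard2} are additive under tensor product of line bundles, reduces to $\r{L}=\O(D)$, $\r{L}'=\O(E)$ with $D$ and $E$ transversal, and then computes the Euler form directly from the structure sequences, identifying the right-hand side with $\chi(\O_{D\cap E})=\#(D\cap E)$. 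The paper's approach is more elementary (no HRR input, only the definition of the intersection number as a count of transversal intersection points) but requires justifying the reduction to the transversal case via moving lemmas or Serre-type twists; yours trades that geometric reduction for the HRR machine and gets a uniform, case-free computation in which the key structural point --- that $\alpha$ is concentrated in top degree, so the Todd class is invisible --- is made explicit. Both are complete proofs; yours is arguably the cleaner one to write out in full, while the paper's is self-contained at a lower technological level.
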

\begin{proof} (Sketch) One shows that both sides are additive in $\r{L}$, $\r{L}'$. This reduces one to the case
$\mathcal{L}=\O(D)$, $\mathcal{L}'=\O(E)$ with $D$, $E$ being transversal. In that case the result is a direct computation
using the definitions.
\end{proof}
\section{Algebraic geometry for numerical Grothendieck groups}
\label{sec:numer}
\subsection{Preliminaries}
We recall some results from \cite{Bondal_Polishchuk_93,BondalSymplectic}.
\begin{definitions}\label{def:lattice}
A \emph{Serre lattice} consists of a finitely generated free abelian group~$K$, a nondegenerate bilinear form $\bl{-}{-}$ on $K$ and an automorphism $s$ of $K$ such that
\begin{equation}
\label{eq:iter}
 \forall v,w, \in K:\bl{v}{sw}=\bl{w}{v}
\end{equation}
\end{definitions}
Iterating \eqref{eq:iter} we see in particular that
\[
\langle sv,sw\rangle=\langle v,w\rangle
\]
The \emph{Gram matrix} of $K$ with respect to a basis $(e_i)_i$ is the matrix $M=\langle e_i,e_j\rangle_{ij}$. It is easy to see that the matrix of $s$ with respect to the same basis is then given by
\begin{equation}
\label{eq:sformula}
s=M^{-1} M^{\mathrm{t}}.
\end{equation}
In particular we see that if $\langle-,-\rangle$ is unimodular (for example if there is an exceptional basis) then the characteristic polynomial of $s$ is given by
\[
\chi(s)(t)\define \sum_i \chi_i(s)t^i=\det(M-tM^{\mathrm{t}})
\]
It follows that $\chi(s)(t)$ satisfies the functional equation
\[
\chi(s) (t^{-1})=(-1)^nt^{-n}\chi(s)(t)
\]
where $n\define \rk K$. In particular $\chi_i(s)=(-1)^n \chi_{n-i}(s)$. It is also clear that $\chi_0(s)=\det(s)=1$ and $\chi_n(s)=(-1)^n$.

\begin{lemmas} \cite{Bondal_Polishchuk_93} 
\label{suni1} If $\rk K=3$ and if $K$ has an exceptional
basis
then
$s$ is unipotent if and only \eqref{eq:rk3} holds.
\end{lemmas}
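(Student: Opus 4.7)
The plan is to compute the characteristic polynomial of $s$ explicitly via \eqref{eq:sformula} and compare it with $(1-t)^3$. Since the exceptional Gram matrix $M$ is upper unitriangular, it is unimodular, so we are in the situation where $\chi(s)(t)=\det(M-tM^{\mathrm{t}})$ holds. The functional equation together with $\chi_0(s)=1$ and $\chi_3(s)=-1$ forces $\chi_2(s)=-\chi_1(s)$, so
\[
\chi(s)(t)=1+\chi_1(s)t-\chi_1(s)t^2-t^3.
\]
Consequently $s$ is unipotent if and only if $\chi(s)(t)=(1-t)^3$, equivalently $\chi_1(s)=-3$; the whole problem reduces to matching $\det(M-tM^{\mathrm{t}})$ with $(1-t)^3$ as polynomials in $t$.

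Next I would carry out the determinant expansion. Writing
\[
M-tM^{\mathrm{t}}=\begin{bmatrix}1-t & a & b\\ -ta & 1-t & c\\ -tb & -tc & 1-t\end{bmatrix},
\]
a cofactor expansion along the first row produces $(1-t)^3$, three terms of the form $ta^2(1-t),tb^2(1-t),tc^2(1-t)$, and two cross terms $-abtc$ and $abct^2$. The key algebraic observation is that the cross terms combine to $-abc\cdot t(1-t)$, exposing a common factor of $(1-t)$ in the whole expression, so that
\[
\det(M-tM^{\mathrm{t}})=(1-t)\bigl[(1-t)^2+t(a^2+b^2+c^2-abc)\bigr].
\]

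Finally, matching this with $(1-t)^3=(1-t)\cdot(1-t)^2$ and cancelling the common factor reduces unipotency to the polynomial identity $t(a^2+b^2+c^2-abc)=0$, which holds precisely when $a^2+b^2+c^2-abc=0$, i.e.\ \eqref{eq:rk3}. Conversely, if the Markov equation holds then $\chi(s)(t)=(1-t)^3$ whose only root is $1$, hence $s$ is unipotent. The only genuinely computational step is the $3\times3$ determinant expansion; it is routine, but the bookkeeping that exposes the common factor $(1-t)$ and isolates the Markov polynomial as the coefficient of $t$ is where one must be careful.
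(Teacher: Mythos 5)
Your proposal is correct and follows essentially the same route as the paper: both reduce unipotency to $\chi_1(s)=-3$ via the functional equation for $\chi(s)(t)=\det(M-tM^{\mathrm{t}})$, and your explicit factorization $\det(M-tM^{\mathrm{t}})=(1-t)\bigl[(1-t)^2+t(a^2+b^2+c^2-abc)\bigr]$ simply carries out the determinant computation that the paper compresses into ``one verifies.''
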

\begin{proof} 
$s$ is unipotent iff $\chi(s)(t)=(1-t)^3$ which in view of the above discussion is equivalent to $\chi_1(s)=-3$.
One verifies that this yields precisely \eqref{eq:rk3}.
\end{proof}
A similar result hods in the rank 4 case:
\begin{lemmas} \label{suni2}
\cite{BondalSymplectic} If $\rk K=4$ and if $K$ has an exceptional
basis
then
$s$ is unipotent if and only \eqref{eq:rk4} holds.
\end{lemmas}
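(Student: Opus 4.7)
The plan is to unpack the equivalence between unipotency of $s$ and the two Diophantine equations in \eqref{eq:rk4} by working with the characteristic polynomial $\chi(s)(t)=\det(M-tM^{\mathrm{t}})$ introduced in the preliminaries, using the functional equation $\chi_i(s)=(-1)^n\chi_{n-i}(s)$ to cut down the number of independent scalar conditions.

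First I would record that unipotency of $s$ is equivalent to $\chi(s)(t)=(1-t)^4=1-4t+6t^2-4t^3+t^4$. For $n=4$, the functional equation forces $\chi_0=\chi_4=1$ and $\chi_1=\chi_3$ automatically, so unipotency reduces to the two numerical conditions $\chi_1(s)=-4$ and $\chi_2(s)=6$. Moreover, plugging $t=1$ gives $\chi(s)(1)=2+2\chi_1+\chi_2$, so once $\chi_1=-4$ is imposed the second condition $\chi_2=6$ is equivalent to $\chi(s)(1)=0$. I will match each of these two conditions with one equation of \eqref{eq:rk4}.

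Next, for the cubic equation, I would compute $\chi_1=-\tr(s)$ by differentiating $\det(M-tM^{\mathrm{t}})$ at $t=0$, or equivalently by writing out the diagonal entries of $s=M^{-1}M^{\mathrm{t}}$. Since $M$ is unit upper triangular one can invert it directly, and a routine unwinding gives
\[
\tr(s)=4-\bigl[a^2+b^2+c^2+d^2+e^2+f^2-abd-ace-bcf-def+acdf\bigr].
\]
Thus $\chi_1(s)=-4$ is exactly the first equation of \eqref{eq:rk4}.

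Finally, for the quadratic equation, I would observe that $\chi(s)(1)=\det(M-M^{\mathrm{t}})$. The matrix $M-M^{\mathrm{t}}$ is skew-symmetric of size $4$ with upper-triangular entries $a,b,c,d,e,f$, so its determinant equals the square of its Pfaffian, namely $(af-be+cd)^2$. Hence the condition $\chi(s)(1)=0$ is equivalent to $af-be+cd=0$, which is the second equation of \eqref{eq:rk4}. The only mildly tedious step is the explicit computation of $\tr(M^{-1}M^{\mathrm{t}})$, but it is entirely mechanical; everything else is a short structural observation.
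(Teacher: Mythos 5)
Your proof is correct, and while it shares the paper's overall strategy---reduce unipotency of $s$ to the two coefficient conditions $\chi_1(s)=-4$ and $\chi_2(s)=6$ via the functional equation, then match these with the two equations of \eqref{eq:rk4}---it replaces the paper's verification, which is done by computer algebra, with a more conceptual hand computation. The paper simply checks the identities $\chi_1(s)=q_1-4$ and $\chi_2(s)=q_2^2-2q_1+6$ by machine and concludes. You obtain the first identity from $\chi_1(s)=-\operatorname{tr}(s)=-\operatorname{tr}(M^{-1}M^{\mathrm{t}})=q_1-4$ (a routine but hand-checkable computation, e.g.\ via $\operatorname{tr}(M^{-1}M^{\mathrm{t}})=\sum_{i,j}(M^{-1})_{ij}M_{ij}$), and you replace the second identity entirely by the observation that $\chi(s)(1)=\det(M-M^{\mathrm{t}})$ is the square of the Pfaffian $af-be+cd$, combined with $\chi(s)(1)=2+2\chi_1+\chi_2$. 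This is consistent with the paper's identity (indeed $2+2(q_1-4)+(q_2^2-2q_1+6)=q_2^2$), but your route explains structurally why the second equation of \eqref{eq:rk4} is the vanishing of a Pfaffian and why it effectively enters the coefficient $\chi_2$ squared; both directions of the equivalence go through without difficulty. The only cost is that the trace computation must still be carried out explicitly, but that is no worse than what the paper delegates to a computer.
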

\begin{proof} 
Now we must have $\chi_1(s)=-4$, $\chi_2(s)=6$.
Let $q_1$, $q_2$ denote  the left-hand side of the respective equations in \eqref{eq:rk4}. Then using a computer algebra system
one checks $\chi_1(s)=q_1-4$. $\chi_2(s)=q_2^2-2q_1+6$. The conclusion follows.
\end{proof}
\subsection{Lattices of surface type}
\begin{definitions}\label{def:SPStype}
Let $K=(K,\langle-,-\rangle,s)$ be a Serre lattice.
We say that $K$ is of \emph{surface type} if
\begin{enumerate}
	\item $s$ is unipotent.
	\item $\rk(s-1)\le 2$.
\end{enumerate}
If in addition one has
\begin{enumerate}
\setcounter{enumi}{2}
\item $(s-1)^2\neq 0$.
\end{enumerate}
then $K$ is said to be of \emph{surface* type}.
\end{definitions}
\begin{propositions}\label{thm:SPStype}
Let $X$ be a  smooth projective surface. Then $K(X)_\text{num}$ is of surface type. Moreover
$K(X)_\text{num}$ is of surface* type if and only if $\delta(X)\neq 0$.
\end{propositions}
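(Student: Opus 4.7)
The first assertion is essentially free: Proposition \ref{lem:unipotent} specialised to $d=2$ gives $(s-1)^3=0$, so $s$ is unipotent, and Proposition \ref{lem:rank} supplies $\rk(s-1)\le 2$. Hence $K(X)_{\text{num}}$ is of surface type without further work, and all the content lies in the equivalence $(s-1)^2\neq 0\iff \delta(X)\neq 0$.

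The plan for this equivalence is to combine Lemma \ref{lem:dpart}, which in the case $d=2$, $\r{E}=[\r{O}_X]$ reads
\[
\int (s-1)^2[\r{O}_X]=\delta(X),
\]
with the Riemann--Roch computation $\langle \r{E},z\rangle=\rk(\r{E})\int z$, valid for every $z\in F^2K(X)$ and every $\r{E}\in K(X)$ (only the top component of $\ch(z)$ contributes to the pairing). The consequence I will exploit is: if $z\in F^2K(X)$ satisfies $\int z=0$, then $z$ lies in the right radical of the Euler form, hence (by the symmetry \eqref{eq:iter}) in its full radical, so the image of $z$ in $K(X)_{\text{num}}$ vanishes; conversely if $\int z\neq 0$ then already $\langle [\r{O}_X],z\rangle\neq 0$ and $z$ survives in $K(X)_{\text{num}}$.

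For the ``if'' direction, when $\delta(X)\neq 0$ the displayed identity shows that $(s-1)^2[\r{O}_X]$ has nonzero integral, hence is nonzero in $K(X)_{\text{num}}$, so $(s-1)^2\neq 0$. For the converse, assume $\delta(X)=0$ and imitate the proof of Proposition \ref{lem:rank} with the specific choice $o=[\r{O}_X]$ as representative for $V/F^1V$, where $V=K(X)_{\text{num},\Q}$ (this is allowed since $\rk[\r{O}_X]=1\neq 0$). That argument yields $(s-1)V\subset \Q(so-o)+F^2V$, and applying $s-1$ once more together with $(s-1)F^2V\subset F^3V=0$ gives $(s-1)^2V\subset \Q(s-1)^2[\r{O}_X]$. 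By the previous paragraph $(s-1)^2[\r{O}_X]$ already vanishes in $K(X)_{\text{num}}$, so $(s-1)^2=0$ on $V$.

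The only mildly delicate point is the bookkeeping around passing from $F^2K(X)$ to its image in $K(X)_{\text{num}}$ and the verification that vanishing of the integral is the precise obstruction to vanishing numerically; beyond the Riemann--Roch input already present in Lemma \ref{lem:dpart}, no genuine difficulty arises.
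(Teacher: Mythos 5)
Your proof is correct and takes essentially the same route as the paper: the first assertion by combining Propositions \ref{lem:unipotent} and \ref{lem:rank}, and the equivalence $(s-1)^2\neq 0\iff\delta(X)\neq 0$ via Lemma \ref{lem:dpart}. The paper's one-line proof leaves implicit exactly the Riemann--Roch bookkeeping you spell out, namely that a class $z\in F^2K(X)$ dies in $K(X)_{\num}$ precisely when $\int z=0$.
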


\begin{proof}
The first part  follows by combining Propositions \ref{lem:unipotent} and \ref{lem:rank}. The second part
follows from Lemma \ref{lem:dpart}.
\end{proof}
\begin{remarks}
If $X$ is a Calabi-Yau surface then $K(X)_{\num}$ is of surface type but not of surface* type as $s=1$.
\end{remarks}
Below we will often work in the $\Q$-vector space $K_{\Q}\define \Q\tr_{\Z}K$. In that case we extend $s$ and $\langle-,-\rangle$ silently to $K_{\Q}$. This convention is also used
for other concepts introduced below.

\subsection{The numerical codimension filtration}
We now construct a numerical analogue of the codimension filtration. 
\begin{propositions} \label{prop:jordan} Let $K=(K,\langle-,-\rangle,s)$ be of surface* type and $V\define K_\d{Q}$ as above. Then the Jordan blocks of $s\in \End_\d{Q}(V)$ have sizes $(3,1,\ldots,1)$. In
particular $(s-1)^3=0$.
\end{propositions}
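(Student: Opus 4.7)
The plan is to read off the Jordan structure of $s$ directly from the three defining conditions of surface* type; the bilinear form plays no further role beyond ensuring that $s$ is a well-defined automorphism. Since $s$ is unipotent, over $\Q$ it is conjugate to a direct sum of unipotent Jordan blocks $J_{b_1} \oplus \cdots \oplus J_{b_k}$ with sizes $b_1 \geq b_2 \geq \cdots \geq b_k$ and $\sum_i b_i = \dim_\Q V$. The goal is to pin down the partition $(b_1,\ldots,b_k)$.

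Two elementary identities do all the work. First, a single unipotent Jordan block $J_b$ satisfies $\rk(J_b - 1) = b - 1$, so
\begin{equation*}
\rk(s-1) \;=\; \sum_{i=1}^{k}(b_i - 1).
\end{equation*}
Second, $(J_b - 1)^m = 0$ if and only if $m \geq b$, so $(s-1)^m = 0$ if and only if $b_1 \leq m$. Condition (2) of surface type then translates to $\sum_i (b_i - 1) \leq 2$, and condition (3) of surface* type translates to $b_1 \geq 3$, i.e.\ $b_1 - 1 \geq 2$.

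Combining these two inequalities, since all $b_i - 1$ are non-negative, we must have $b_1 - 1 = 2$ and $b_i - 1 = 0$ for $i \geq 2$. Thus $b_1 = 3$ and $b_i = 1$ for $i \geq 2$, i.e.\ the Jordan type of $s$ is $(3,1,\ldots,1)$. The "in particular" statement $(s-1)^3 = 0$ is then immediate from the second identity applied with $m = 3$.

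There is essentially no obstacle in this argument; the only point requiring any care is the book-keeping that translates the rank and nilpotency conditions into statements about the partition $(b_i)$, but this is standard linear algebra over $\Q$. Note that neither the Serre relation $\langle v, sw\rangle = \langle w, v\rangle$ nor the non-degeneracy of $\langle-,-\rangle$ is needed beyond what has already been used to guarantee unipotence.
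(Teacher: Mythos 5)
Your argument is correct and is essentially identical to the paper's own proof: both translate $\rk(s-1)\le 2$ into $\sum_i(b_i-1)\le 2$ and $(s-1)^2\neq 0$ into the existence of a block of size at least $3$, then conclude the partition must be $(3,1,\ldots,1)$. No issues.
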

\begin{proof} Let $(n_1,\ldots,n_t)$ be the sizes of the Jordan blocks of $s$. The fact that $\rk(s-1)\le 2$ implies $\sum_i (n_i-1)\le 2$.
The fact that $(s-1)^2\neq 0$ implies the existence of at least one $n_i$ such that $n_i\ge 3$.
The conclusion follows.
\end{proof}
\begin{lemmas}\label{filtration}
Assume $K=(K,\langle-,-\rangle,s)$ is of surface* type and put $V=K_{\Q}$.   Put $F^1V\stackrel{\textrm{def}}{=}\ker (s-1)^2$ and
  $F^2V\stackrel{\textrm{def}}{=}\im(s-1)^2$. This yields a filtration: 
\[
0=F^3V\subset F^2V\subset F^1V\subset F^0V=V
\]
such that
\begin{itemize}
\item
$(s-1)F^i V \subset F^{i+1}V$  and moreover $(s-1)F^1V = F^2V$.
\item
$V/F^1V\cong F^2V\cong \Q$.
\end{itemize}
\end{lemmas}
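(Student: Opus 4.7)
The plan is to leverage Proposition \ref{prop:jordan} directly: it tells us that $s \in \End_\Q(V)$ has one Jordan block of size $3$ and all other blocks of size $1$, and in particular that $(s-1)^3 = 0$.

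First I would fix a Jordan decomposition $V = W \oplus U$, where $W$ is the three-dimensional block carrying a Jordan basis $(e_1, e_2, e_3)$ satisfying $(s-1)e_1 = 0$, $(s-1)e_2 = e_1$, $(s-1)e_3 = e_2$, and $U$ is the complement on which $s$ acts as the identity. A direct inspection then identifies
\[
F^1V = \ker(s-1)^2 = \Q e_1 \oplus \Q e_2 \oplus U, \qquad F^2V = \im(s-1)^2 = \Q e_1,
\]
which immediately yields $V/F^1V \cong F^2V \cong \Q$.

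Next I would verify the filtration properties. The containment $F^2V \subset F^1V$ is the statement $(s-1)^4 = 0$, which follows from $(s-1)^3 = 0$. The inclusions $(s-1)F^0V \subset F^1V$ and $(s-1)F^2V \subset F^3V = 0$ likewise reduce to $(s-1)^3 = 0$. For the remaining inclusion and the claimed equality $(s-1)F^1V = F^2V$, I would read off from the Jordan model that $(s-1)$ annihilates both $e_1$ and $U$ and sends $e_2$ to $e_1$, so $(s-1)F^1V = \Q e_1 = F^2V$.

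I do not anticipate a substantive obstacle: once Proposition \ref{prop:jordan} pins down the Jordan type, every assertion reduces to direct bookkeeping on the single rank-$3$ block. The only step that uses more than the nilpotency $(s-1)^3 = 0$ is the identification $(s-1)F^1V = F^2V$, and this is precisely what the presence of exactly one Jordan block of size $3$ (as opposed to a configuration with smaller blocks that would still satisfy a weaker nilpotency relation) guarantees.
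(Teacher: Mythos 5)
Your proof is correct and follows the same route as the paper: the paper's proof consists of the single line ``this is a straightforward consequence of Proposition~\ref{prop:jordan}'', and your argument is exactly the bookkeeping on the Jordan type $(3,1,\ldots,1)$ that this remark leaves implicit. The explicit identification of $F^1V$ and $F^2V$ in the Jordan basis, and the observation that the equality $(s-1)F^1V=F^2V$ uses the full Jordan type rather than just $(s-1)^3=0$, are both accurate.
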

\begin{proof}
This is a straightforward consequence of Proposition~\ref{prop:jordan}.
\end{proof}
This filtration has the following, more intrinsic characterization.
\begin{lemmas}\label{filtrationunique}
Let $K=(K,\langle-,-\rangle,s)$ be a Serre lattice  and let $V=K_{\Q}$.
Let $(\r{F}^iV)_i$ be a filtration 
$$0=\r{F}^3V\subset \r{F}^2V\subset \r{F}^1V\subset \r{F}^0V=V$$
such that
\begin{itemize}
\item $(s-1)\r{F}^iV\subset \r{F}^{i+1}V$
\item $V/\r{F}^1V\cong \r{F}^2V\cong \Q$.
\end{itemize}
then $\r{F}$ coincides with the filtration $F$ on $V$ constructed in Lemma \ref{filtration} whenever $K$ is of surface* type.
\end{lemmas}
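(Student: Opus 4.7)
The plan is to prove both inclusions $\mathcal{F}^i V = F^i V$ for $i=1,2$ by a two-step pattern: first exhibit one containment directly from the conditions $(s-1)\mathcal{F}^iV\subset \mathcal{F}^{i+1}V$ and $\mathcal{F}^3V=0$, then use the fact that both spaces have the same small codimension (resp.\ dimension) to upgrade the inclusion to equality. Lemma \ref{filtration} and Proposition \ref{prop:jordan} already tell us that $\dim V/F^1V=1$ and $\dim F^2V=1$, so the numerics line up with the hypotheses $V/\mathcal{F}^1V\cong \mathcal{F}^2V\cong \Q$.

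First I would handle $\mathcal{F}^1V = F^1V$. Take $v\in \mathcal{F}^1V$. Then $(s-1)v\in \mathcal{F}^2V$ by hypothesis, and a second application gives $(s-1)^2v\in \mathcal{F}^3V = 0$. Thus $\mathcal{F}^1V\subset \ker(s-1)^2 = F^1V$. Since both subspaces have codimension $1$ in $V$, the inclusion is an equality.

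Next I would handle $\mathcal{F}^2V = F^2V$. For arbitrary $v\in V = \mathcal{F}^0V$ the hypothesis gives $(s-1)v\in \mathcal{F}^1V$, and applying $(s-1)$ once more places $(s-1)^2v$ in $\mathcal{F}^2V$. Hence $F^2V = \im(s-1)^2\subset \mathcal{F}^2V$. As both sides are $1$-dimensional (the target by the assumption $\mathcal{F}^2V\cong\Q$, the source by Lemma \ref{filtration}), equality follows.

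There is no real obstacle here; the only place where surface* type is used is to invoke Lemma \ref{filtration}, which supplies the dimension count ($\dim V/F^1V = \dim F^2V = 1$) needed to promote the two inclusions to equalities. The argument does not require the additional structure from $\langle -,-\rangle$, only the nilpotency behaviour of $s-1$ dictated by the Jordan block pattern $(3,1,\ldots,1)$ of Proposition \ref{prop:jordan}.
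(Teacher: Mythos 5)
Your proof is correct and follows essentially the same route as the paper: establish the inclusions $\mathcal{F}^1V\subset \ker(s-1)^2$ and $\im(s-1)^2\subset\mathcal{F}^2V$ from the nilpotency conditions, then upgrade to equalities by the dimension count supplied by Lemma \ref{filtration}. Your write-up is just a more detailed version of the paper's two-line argument.
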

\begin{proof}
As the dimensions of $F^iV$ and $\r{F}^iV$ coincide, it suffices to verify the existence of  appropriate inclusions. 
Since $(s-1)^2 \r{F}^1V=0$, we have $\r{F}^1 V  \subset F^1 V$ and in a similar vein $(s-1)^2 V =F^2 V  \subset \r{F}^2V$. 
\end{proof}
\begin{corollarys}\label{canonical=codimension}
Let $X$ be a  smooth projective surface such that $\delta(X)\neq 0$. The codimension filtration on $K(X)_{\num,\Q}$ 
coincides with the filtration defined in Lemma \ref{filtration}, which is well-defined  by Proposition \ref{thm:SPStype}.
\end{corollarys}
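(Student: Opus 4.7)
The plan is to apply Lemma \ref{filtrationunique} directly, using the codimension filtration $(F^i K(X))_i$ (descended to $V\define K(X)_{\num,\Q}$) as the candidate filtration $\r{F}^\bullet V$. Since Proposition \ref{thm:SPStype} tells us that $K(X)_{\num}$ is of surface* type when $\delta(X)\neq 0$, the hypothesis of Lemma \ref{filtrationunique} is met, and it suffices to verify the two bulleted conditions for the codimension filtration.

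First I would note that the codimension filtration on $K(X)$ descends to $K(X)_{\num}$: one simply takes the image of $F^i K(X)$ under the quotient map. The inclusion $(s-1)\r{F}^i V\subset \r{F}^{i+1}V$ is then immediate from the sketch of Proposition \ref{lem:unipotent} (with $d=2$, so that $(-1)^d s-1=s-1$).

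Next I would check the dimension conditions. For $V/\r{F}^1V\cong \Q$: the rank map $\rk: K(X)\to\Z$ is well defined on numerical classes and identifies $K(X)_{\num}/F^1 K(X)_{\num}$ with $\Z$, so after tensoring with $\Q$ we get $\Q$. For $\r{F}^2V\cong\Q$: on a surface, $F^2 K(X)$ is generated by skyscraper sheaves $[\O_x]$ for closed points $x\in X$, all of which are numerically equivalent to a single class. This class is not numerically trivial since $\bl{[\O_X]}{[\O_x]}=\chi(\O_x)=1$, hence $\r{F}^2 V\cong\Q$. Note that this step does not itself use $\delta(X)\neq 0$; that hypothesis only enters through the need to have a surface* type lattice so that Lemma \ref{filtrationunique} is applicable.

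The main (very mild) obstacle is to be careful that the codimension filtration genuinely descends to $K(X)_{\num}$ with the right graded pieces, i.e.\ that numerical equivalence does not collapse $F^2$ or enlarge $F^1$ in an unexpected way; the nondegeneracy of the pairing $\bl{[\O_X]}{[\O_x]}=1$ and the rank map handle these respective points. Once the two bullets are verified, Lemma \ref{filtrationunique} yields the equality of filtrations on $V$, which is the claim.
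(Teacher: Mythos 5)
Your proposal is correct and follows essentially the same route as the paper: both reduce the claim to Lemma \ref{filtrationunique} via Proposition \ref{thm:SPStype}, cite the proof of Proposition \ref{lem:unipotent} for the condition $(s-1)\r{F}^iV\subset\r{F}^{i+1}V$, and note that the graded-piece conditions hold (the paper calls this ``clear''; you usefully spell out the rank map and the skyscraper class $[\O_x]$ with $\bl{[\O_X]}{[\O_x]}=1$). No gap.
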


\begin{proof} 
The fact that the codimension filtration of a smooth projective variety satisfies the first condition of Lemma \ref{filtrationunique}
is part of the proof of Proposition \ref{lem:unipotent}. The fact that the second condition holds is clear.
\end{proof}
The above result justifies the following definition:
\begin{definitions}
Let $K$ be of surface* type. The filtration on $V=K_{\Q}$ constructed in Lemma \ref{filtration} is called the \emph{codimension filtration}. The induced filtration on $K$ defined by $F^iK\define F^iV\cap K$
will be referred to as the \emph{codimension filtration on $K$}.
\end{definitions}
\subsection{The numerical Picard group} 
\begin{definitions}\label{def:Picard} Let $K=(K,\langle-,-\rangle,s)$ be a lattice of surface* type. 
We define the numerical Picard group of $K$ as $$\Num(K)\stackrel{\textrm{def}}{=}F^1K/F^2K$$
\end{definitions}
Clearly $\Num(K)$ is a free abelian group and $\rk \Num(K)=\rk K-2$. 
\begin{propositions}\label{negativeintersectionform} 
The restriction of $\langle-,-\rangle$ induces a nondegenerate symmetric form on $\Num_\Q(K)\stackrel{\textrm{def}}{=}\Num(K)\tr_\Z \Q$.
\end{propositions}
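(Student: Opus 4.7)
The plan is to establish three properties in turn: that $\langle-,-\rangle$ descends to a well-defined bilinear form on $F^1V/F^2V$ (where $V := K_\Q$), that this induced form is symmetric, and that it is nondegenerate. The starting observation is that iterating the Serre relation \eqref{eq:iter} yields $\langle sv,sw\rangle = \langle v,w\rangle$, so $s$ is an isometry of $\langle-,-\rangle$. Consequently the adjoint of $s$ with respect to $\langle-,-\rangle$ on either side is $s^{-1}$, and the adjoint of $T := s-1$ is $T^{\ast} := s^{-1}-1 = -s^{-1}T$. Since $s^{-1}$ is invertible, $(T^{\ast})^2 = s^{-2}T^2$ shares with $T^2$ its kernel $F^1V$ and its image $F^2V$, both being one-codimensional resp.\ one-dimensional by Proposition~\ref{prop:jordan}.

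For descent, I would take $u \in V$ and $v \in F^1V$ and compute $\langle T^2 u, v\rangle = \langle u, (T^{\ast})^2 v\rangle = 0$ and symmetrically $\langle v, T^2 u\rangle = \langle (T^{\ast})^2 v, u\rangle = 0$, establishing $\langle F^2V, F^1V\rangle = \langle F^1V, F^2V\rangle = 0$; so $\langle-,-\rangle$ descends to $F^1V/F^2V = \Num_\Q(K)$. Symmetry of the induced form is then immediate from Serre: for $v,w \in F^1V$, the identity $\langle w, v\rangle - \langle v, w\rangle = \langle v, Tw\rangle$ combined with $Tw \in F^2V$ (from $(s-1)F^1V = F^2V$ in Lemma~\ref{filtration}) shows the right hand side vanishes.

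For nondegeneracy I would argue by dimension. Proposition~\ref{prop:jordan} gives $\dim F^2V = 1 = \dim V/F^1V$, so the annihilator of $F^1V$ inside $V$ with respect to the (nondegenerate) form on $V$ is one-dimensional. The descent step places $F^2V$ inside this annihilator, and the dimension count forces equality; hence any $v \in F^1V$ pairing trivially with all of $F^1V$ must lie in $F^2V$, which is exactly the nondegeneracy statement. The one step requiring real care is computing the adjoint of $T$ correctly, since $\langle-,-\rangle$ is not itself symmetric; once $T^{\ast}$ is identified and seen to be intertwined with $T$ by the invertible operator $s^{-1}$, the remaining descent, symmetry, and nondegeneracy claims all reduce to routine bookkeeping with Serre duality and the Jordan structure of $s$.
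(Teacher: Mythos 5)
Your proposal is correct and follows essentially the same route as the paper: the descent step is the paper's computation $\langle v,(s-1)^2w\rangle=\langle s^{-2}(s-1)^2v,w\rangle=0$ repackaged in adjoint language, and the symmetry and nondegeneracy arguments (via $\langle w,v\rangle-\langle v,w\rangle=\langle v,(s-1)w\rangle$ and the dimension count $F^2V\subset(F^1V)^\perp$ with both one-dimensional) are identical. No gaps.
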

\begin{proof}
Put $V=K_{\Q}$ and $\Num(V)=F^1V/F^2V$. Then clearly $\Num(V)=\Num_{\Q}(K)$.
We first show that $F^2 V$ lies in the right radical of the restriction of $\bl{-}{-}$ to $F^1V$.
Let $v \in F^1V$ and $(s-1)^2(w) \in F^2V$. Then 
$$\bl{v}{(s-1)^2w}=\bl{s^{-2}(s-1)^2v}{w}=0$$
since $(s-1)^2v=0$. A similar proof shows that $F^2 V$ is also in the left radical, showing that $\langle-,-\rangle$ is indeed well defined on $\Num(V)$.

Let $v,w\in F^1V$. Then $\langle v,w\rangle-\langle w,v\rangle=\langle w,(s-1)v\rangle=0$ since $(s-1)v\in F^2V$ is in the radical of $\langle-,-\rangle$, as shown above.
Thus $\langle-,-\rangle$ is symmetric when restricted
to $F^1V$.

Finally we show that $\langle-,-\rangle$ induces a non-degenerate bilinear form on $\Num(V)$. Since $\langle-,-\rangle$ is non-degenerate on $V$
it follows from the second property in Lemma \ref{filtration}  that $\dim(F^1V)^\perp=1$. Since $F^2V\subset (F^1V)^\perp$ is one-dimensional, also by Lemma \ref{filtration}, we conclude $(F^1V)^\perp=F^2V$.
This yields the desired conclusion.
\end{proof}

\begin{definitions}
\label{def:negativeintesectionform}
The restriction of $-\bl{-}{-}$ to $F^1K$ is called the intersection
form\footnote{the $(-)$ sign is motivated by Lemma \ref{lem:picardformcoincides}} and denoted by $(-,-)$. The induced form on $\Num(K)$ is also denoted by $(-,-)$. Sometimes we write $v\cdot w$ instead of $(v,w)$.
\end{definitions}
\begin{lemmas}
\label{lem:picardformcoincides}
Let $X$ be a  smooth projective surface  such that $\delta(X)\neq 0$. Let $\Num(X)$ be the group of divisors 
on $X$, up to numerical equivalence. Then
the map
$$\Phi: \Num_\Q(X) \mor \Num_{\Q}\left(K(X)_{\num}\right): [\r{L}] \fun [\r{L}]-[\r{O}_X]$$
is an isomorphism of groups such that $\Phi([\r{L}]\cdot [\r{L}'])=\Phi([\r{L}])\cdot\Phi([\r{L'}])$.
\end{lemmas}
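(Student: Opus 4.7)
The plan is to verify in turn that $\Phi$ is well-defined, that it is a group homomorphism and an isomorphism, and finally that it respects the intersection forms. Throughout I use Corollary~\ref{canonical=codimension} to identify the filtration $F^\bullet$ on $K(X)_{\num,\Q}$ with (the image of) the geometric codimension filtration on $K(X)_\Q$, together with the fact that the Chern character gives an isomorphism $\ch: K(X)_\Q \to A^*(X)_\Q$ carrying the codimension filtration to the codimension grading.

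First, to check that $\Phi$ is well-defined: for a line bundle $\r{L}$, the class $[\r{L}]-[\r{O}_X]$ has rank $0$, hence lies in $F^1K(X)_{\num,\Q}$. If $\r{L}$ and $\r{L}'$ are numerically equivalent, then $c_1(\r{L})-c_1(\r{L}')$ vanishes numerically on $X$, which via $\ch$ forces $[\r{L}]-[\r{L}']$ to lie in $F^2K(X)_{\num,\Q}$. For the homomorphism property, the element $[\r{L}\otimes\r{L}']+[\r{O}_X]-[\r{L}]-[\r{L}']$ has rank zero and vanishing first Chern class, so again lies in $F^2K(X)_{\num,\Q}$.

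For bijectivity, the Chern character (after quotienting both sides by numerical equivalence) yields an isomorphism $F^1K(X)_{\num,\Q}/F^2K(X)_{\num,\Q}\cong\Num_\Q(X)$, so the source and target of $\Phi$ have the same $\Q$-dimension. Injectivity is then immediate: $\Phi([\r{L}])=0$ means $c_1(\r{L})$ is numerically trivial on $X$, hence $[\r{L}]=0$ in $\Num_\Q(X)$. Surjectivity follows because every class in $\Num_\Q(X)$ is a $\Q$-linear combination of first Chern classes of line bundles.

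Finally, the intersection-form compatibility reduces immediately to Lemma~\ref{lem:eulerform-picardform}: for line bundles $\r{L}$ and $\r{L}'$ one has
\[
c_1(\r{L})\cdot c_1(\r{L}')=-\bl{[\r{L}]-[\r{O}_X]}{[\r{L}']-[\r{O}_X]}=\Phi([\r{L}])\cdot\Phi([\r{L}']),
\]
the last equality being Definition~\ref{def:negativeintesectionform}. The only genuinely non-formal input is the identification of $F^1/F^2$ on $K(X)_{\num,\Q}$ with $\Num_\Q(X)$, which rests on the compatibility of the Chern character with numerical equivalence on both sides; everything else is essentially bookkeeping.
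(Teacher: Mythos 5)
Your proof is correct, and it shares the paper's skeleton: the classical isomorphism $\Pic(X)\cong \r{F}^1K(X)/\r{F}^2K(X)$ sending $[\r{L}]$ to $[\r{L}]-[\r{O}_X]$, Corollary~\ref{canonical=codimension} to match the geometric codimension filtration with the abstract one from Lemma~\ref{filtration}, and Lemma~\ref{lem:eulerform-picardform} for the compatibility of the forms. Where you genuinely diverge is in how the map is descended to numerical equivalence. The paper gets this descent out of Lemma~\ref{lem:eulerform-picardform} itself: since $\Phi$ intertwines the intersection form on $\Pic_\Q(X)$ with the restricted Euler form on $\r{F}^1/\r{F}^2$, it identifies the radicals of the two forms, and passing to the quotients by these radicals produces exactly $\Num_\Q(X)\rightarrow (\r{F}^1K(X)_{\num}/\r{F}^2K(X)_{\num})\tr_\Z\Q$; so one computation does double duty (multiplicativity and radical matching). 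You instead import the Chern character isomorphism $K(X)_\Q\cong A^*(X)_\Q$ together with its compatibility with numerical equivalence on both sides --- a Grothendieck--Riemann--Roch-type input, heavier than anything the paper's proof explicitly invokes here --- and in exchange you get very transparent verifications of well-definedness, additivity and bijectivity. One step you should make explicit: the claim that numerical equivalence of $\r{L}$ and $\r{L}'$ forces $[\r{L}]-[\r{L}']$ into $F^2K(X)_{\num,\Q}$. In $K(X)_\Q$ this class only lies in $F^1$ (its $\ch_1$ is numerically trivial but need not vanish in $A^1(X)_\Q$); the real point is that its Chern character is numerically trivial in every degree, so by the GRR expression for $\bl{-}{-}$ it lies in the radical and its image in $K(X)_{\num,\Q}$ is actually zero. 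That is precisely the content your appeal to ``compatibility of $\ch$ with numerical equivalence'' has to carry, and it is the same content the paper extracts instead from Lemma~\ref{lem:eulerform-picardform}.
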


\begin{proof}
We denote the classical codimension filtration on $K(X)$ by  $(\r{F}^iK(X))_i$ and we  use the same notation for the induced filtration on $K_{\num}(X)$. It is well known that the morphism 
\[
\Phi: \Pic(X) \stackrel{\simeq}{\mor} \r{F}^1K(X)/\r{F}^2K(X): [\r{L}] \fun [\r{L}]-[\r{O}_X]
\]
is an isomorphism. 
Lemma \ref{lem:eulerform-picardform} shows that $\Phi(\r{L}\cdot \r{L}')=\Phi(\r{L})\cdot \Phi(\r{L'})$. 
This also implies that the radicals of both forms coincide, showing that $\Phi$ descends to an isomorphism
\[
\Phi: \Num_\Q(X)\stackrel{\simeq}{\mor} \bigg( \r{F}^1K(X)/\r{F}^2K(X))/ \rad \bl{-}{-}\bigg)\tr_\Z \Q 
\]
Now by construction we have
\[
(\r{F}^1K(X)/\r{F}^2K(X))/ \rad \bl{-}{-})\otimes_{\Z}\Q= (\r{F}^1K(X)_{\num}/\r{F}^2K(X)_{\num}) \otimes_{\Z}\Q
\] 
Moreover, by Corollary \ref{canonical=codimension}, the latter coincides with the group $\Num_\d{Q}(K(X)_{\num})$ constructed using the filtration in \ref{filtration}.
\end{proof}

\subsection{The canonical class}
We now define
 analogues of  structure sheaves  and canonical sheaves in a lattice of surface* type.
\begin{definitions}\label{def:delta}
Let $K=(K,\langle-,-\rangle,s)$ be of surface* type.
\begin{itemize}
\item A \emph{structure element} in $K$ is an element  ${{o}} \in K$ such that $\overline{{{o}}}$ generates $K/F^1K\cong \Z$. 
\item
The element $\tilde{\omega}\stackrel{\textrm{def}}{=}(s-1)o\in F^1K$ is called the \emph{canonical element} of $K$ associated to ${{o}}$. Its image $\omega$ in $\Num(K)$ is called
the canonical class. Note that by the definition of $s$,  $\langle {{o}},\tilde{\omega}\rangle=\langle o,o\rangle-\langle o,o\rangle =0$.
\item
The \emph{degree} of $K$ is $\delta(K)\stackrel{\textrm{def}}{=}(\omega,\omega)$.
\end{itemize}
\end{definitions}

\begin{lemmas}
\label{lem:deltawd}
Assume that $(K,\langle-,-\rangle,s)$ is of surface* type.
Then the canononical class $\omega\in \Num(K)$ is independent of the choice of ${{o}}$, up to sign.
Hence
$\delta(K)$ is an integer which is independent of the choice of ${{o}}$. Moreover $\delta(K)\neq 0$.
\end{lemmas}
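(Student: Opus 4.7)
I would prove the two assertions separately, handling the independence of $\omega$ up to sign first and then the non-vanishing of $\delta(K)$.

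For the well-definedness of $\omega$, I would take two structure elements $o, o'$ and observe that both map to generators of $K/F^1K \cong \Z$, hence $o' = \epsilon o + f$ with $\epsilon \in \{\pm 1\}$ and $f \in F^1 K$. Applying $s-1$ gives $\tilde\omega' = \epsilon\tilde\omega + (s-1)f$; since $f \in F^1 K \subset \ker(s-1)^2$, Lemma \ref{filtration} yields $(s-1)f \in F^2 V$, and intersecting with $K$ puts it in $F^2 K$. Thus in $\Num(K) = F^1 K/F^2 K$ we have $\omega' = \epsilon\omega$, so $\omega$ is well-defined up to sign and $\delta(K) = (\omega,\omega)$ is independent of $o$. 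The integrality of $\delta(K)$ is immediate: the Euler form is $\Z$-valued on $K$, and by the proof of Proposition \ref{negativeintersectionform} the subgroup $F^2 K$ lies in the radical of $\bl{-}{-}|_{F^1 K}$, so this restriction descends to the $\Z$-valued intersection form $(-,-)$ on $\Num(K)$.

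The heart of the argument is the identity
\[
\delta(K) = \bl{(s-1)^2 o}{o},
\]
which I plan to establish via iterated use of the Serre identity $\bl{v}{sw} = \bl{w}{v}$ and of $s$-invariance of the form. First, taking $v = w = o$ gives $\bl{o}{so} = \bl{o}{o}$, hence $\bl{o}{(s-1)o} = 0$. Second, applying the identity with $v = (s-1)o$, $w = o$ yields $\bl{(s-1)o}{so} = \bl{o}{(s-1)o} = 0$, and expanding $\bl{(s-1)o}{(s-1)o} = \bl{(s-1)o}{so} - \bl{(s-1)o}{o}$ then gives $\bl{(s-1)o}{(s-1)o} = -\bl{(s-1)o}{o}$, i.e.\ $\delta(K) = \bl{(s-1)o}{o}$. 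Third, expanding $s^2 = (s-1)^2 + 2(s-1) + 1$ and using $\bl{so}{o} = \bl{o}{s^2 o}$ (another application of the Serre identity with $v=o$, $w=so$) converts this into $\bl{(s-1)o}{o} = \bl{o}{(s-1)^2 o}$. Finally, since $F^2 V \subset \ker(s-1)$ by Lemma \ref{filtration}, the element $(s-1)^2 o$ is fixed by $s$, and one last application of the Serre identity gives $\bl{o}{(s-1)^2 o} = \bl{(s-1)^2 o}{o}$.

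With this identity in hand, the non-vanishing follows immediately from non-degeneracy: since $o$ is a structure element, $o \notin F^1 V = \ker(s-1)^2$, so $(s-1)^2 o$ is a nonzero element of the one-dimensional space $F^2 V$. By the proof of Proposition \ref{negativeintersectionform} one has $F^2 V = (F^1 V)^\perp$, so $\bl{(s-1)^2 o}{-}$ vanishes on $F^1 V$; if it also vanished on $o$ it would vanish on all of $V = F^1 V + \Q o$, contradicting the non-degeneracy of $\bl{-}{-}$. The main obstacle is the chain of Serre-identity manipulations needed to reach $\delta(K) = \bl{(s-1)^2 o}{o}$; once this identity is secured, the rest is a direct appeal to the non-degeneracy statements already in place.
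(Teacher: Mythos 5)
Your proof is correct and follows essentially the same route as the paper: the same $\pm(o+\gamma)$ argument for well-definedness, and for non-vanishing the same identification of $\delta(K)$ with the pairing of a nonzero element of $F^2K$ (your $(s-1)^2o$ versus the paper's $(s^{-1}-1)(s-1)o$, which differ by an application of $s^{-1}$ that is invisible against $o$ modulo $F^1K$) followed by the observation that $F^2V=(F^1V)^\perp$ and non-degeneracy force this pairing to be nonzero. The only cosmetic difference is that the paper runs the final step as a contradiction ($f=0$ would give $(s-1)^2K=0$), whereas you note directly that $(s-1)^2o\neq 0$ since $o\notin F^1V$.
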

\begin{proof}
Any other element generating $K/F^1K$ must be of the form ${{o}}'\stackrel{\textrm{def}}{=}\pm ({{o}}+\gamma)$ for some $\gamma \in F^1K= \ker(s-1)^2$. If we let $\tilde{\omega}'\stackrel{\textrm{def}}{=}(s-1){{o}}'$, then
$\tilde{\omega}'=\pm \tilde{\omega}\pm (s-1)\gamma$. Since $(s-1)\gamma\in F^2K$ we conclude $\omega=
\pm\omega^\prime$.

Now assume $\delta(K)=0$. Then $\langle (s^{-1}-1)(s-1){{o}},{{o}}\rangle=0$.\\ Since $f\define(s^{-1}-1)(s-1){{o}}\in F^2K$ and $K=\Z {{o}}\oplus F^1K$
we conclude $\langle f,K\rangle=0$. Hence $f=0$ by the non-degeneracy of $\langle-,-\rangle$. However this is impossible since (using the decomposition $K=\Z {{o}}\oplus F^1K$ once more) 
we would then have $(s-1)^2(K)=0$, contradicting the hypothesis that $K$ is of surface* type.
\end{proof}
In view of Lemma \ref{lem:deltawd} the following definition is natural:
\begin{definitions}  If $K$ is of surface type but not of surface* type then we put $\delta(K)=0$.
\end{definitions}

\begin{remarks}
See Lemma \ref{lem:rank-degree} for the relation between $\delta(K)$ and $\delta(X)$.
\end{remarks}

\subsection{Rank and degree functions.}\label{sec:rank}
Let $K=(K,\langle-,-\rangle,s)$ be of surface* type. let ${{o}}$ be a structure element and  $\tilde{\omega} \in F^2(K)$ its associated 
canonical element. We then have a decomposition
$
K=\Z o\oplus F^1 K
$.
In other words every element $v\in K$ can be written as
$
v=r_v {{o}}+v^1
$
for a unique integer $r_v\in \Z$ and  unique $v^1\in F^1V$. We define the rank and degree functions $r,d:K\rightarrow \mathbb{Z}$ as
\begin{align*}
r(v)&\define r_v\\
d(v)&\define (v^1,\omega)=-\langle v^1,\tilde{\omega}\rangle=-\langle v,\tilde{\omega}\rangle 
\end{align*}
.\\
If $r_v\neq 0$ (i.e.\ $v\not\in F^1K$) we  put $\tilde{\eta}_v=\frac{1}{r_v}v^1 \in K_{\Q}$ and we let $\eta_v\in \Num_{\Q}(K)$ be the image of $\tilde{\eta_v}$.
\begin{lemmas}
The morphisms $d,r:K \mor \Z$ are linear. $r$ is independent of the choice of ${{o}}$, up to sign.
$d$ is determined up to sign and a multiple of $r$. 
The partially defined map $K\times K\rightarrow \Num(K)_{\Q}:(v,w) \mapsto \eta_v-\eta_w$ is determined up to a sign.
\end{lemmas}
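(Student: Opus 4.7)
The plan is to unwind the definitions and track how each quantity transforms when the structure element $o$ is replaced by another choice $o' = \epsilon(o + \gamma)$ with $\epsilon \in \{\pm 1\}$ and $\gamma \in F^1K$ (these being all the admissible replacements, as recorded in the proof of Lemma \ref{lem:deltawd}).

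First I would observe that linearity is immediate: the decomposition $K = \Z o \oplus F^1K$ yields a linear projection onto the $\Z o$-summand, which is $r$; and $d(v) = -\langle v,\tilde\omega\rangle$ is linear simply because $\langle-,-\rangle$ is bilinear. For the sign ambiguity of $r$, substituting $o = \epsilon o' - \gamma$ into $v = r_v o + v^1$ gives
\[
v = \epsilon r_v \, o' + (v^1 - r_v \gamma),
\]
so $r'_v = \epsilon r_v$ and $v^{1\prime} = v^1 - r_v\gamma$; in particular $r' = \pm r$.

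Next I would analyze $d$. The canonical element attached to $o'$ is $\tilde\omega' = (s-1)o' = \epsilon\tilde\omega + \epsilon(s-1)\gamma$, with $(s-1)\gamma \in F^2K$. Hence
\[
d'(v) = -\langle v,\tilde\omega'\rangle = -\epsilon\langle v,\tilde\omega\rangle - \epsilon\langle v,(s-1)\gamma\rangle.
\]
Writing $v = r_v o + v^1$, the second term becomes $\langle v^1,(s-1)\gamma\rangle + r_v \langle o,(s-1)\gamma\rangle$, and the first summand vanishes because $F^2K$ lies in the radical of the restriction of $\langle-,-\rangle$ to $F^1K$ (established in the proof of Proposition \ref{negativeintersectionform}). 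Setting $c \define \langle o,(s-1)\gamma\rangle \in \Z$, this gives $d' = \epsilon d - \epsilon c \, r$, which is exactly "up to sign and a multiple of $r$".

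Finally, for the map $(v,w) \mapsto \eta_v - \eta_w$, I would compute from $r'_v = \epsilon r_v$ and $v^{1\prime} = v^1 - r_v\gamma$ that
\[
\tilde\eta'_v = \frac{1}{\epsilon r_v}(v^1 - r_v\gamma) = \epsilon\tilde\eta_v - \epsilon\gamma,
\]
and similarly for $w$. Passing to $\Num_{\Q}(K)$ and subtracting, the $\epsilon\bar\gamma$ contributions cancel, giving $\eta'_v - \eta'_w = \epsilon(\eta_v - \eta_w)$. There is no real obstacle here; the only subtlety is to use Proposition \ref{negativeintersectionform} to kill the $F^1$-part of $\langle v,(s-1)\gamma\rangle$ when handling $d$, and otherwise the result is a bookkeeping exercise in the decomposition $K = \Z o \oplus F^1K$.
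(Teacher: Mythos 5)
Your proposal is correct and is exactly the verification the paper has in mind: the paper's own proof is just ``These are easy verifications,'' and your computation with $o'=\pm(o+\gamma)$, $\gamma\in F^1K$, together with the observation that $(s-1)\gamma\in F^2K$ pairs to zero against $F^1K$, supplies precisely the missing details. Nothing further is needed.
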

\begin{proof}
These are easy verifications.
\end{proof}
The rank and degree functions are connected to the anti-symmetrization of $\langle-,-\rangle$. 
\begin{propositions} 
\label{prop:anti} 
Let $\{v,w\}\define \langle v,w\rangle-\langle w,v\rangle$ be the anti-symmetrization
of $\langle-,-\rangle$. Then
\begin{equation}
\label{eq:anti}
\{v,w\}=
\det\begin{bmatrix}
d(v)&d(w)\\
r(v)&r(w)
\end{bmatrix}
\end{equation}
and if $r(v)\neq 0$, $r(w)\neq 0$.
\begin{equation}
\label{eq:anti2}
\{v,w\}=r(v)r(w)(\eta_v-\eta_w,\omega)
\end{equation}
\end{propositions}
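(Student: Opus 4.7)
The plan is to reduce the antisymmetrization $\{v,w\}$ to expressions involving $d$ and $r$ by exploiting the Serre relation \eqref{eq:iter} together with the explicit decomposition $K = \Z {{o}}\oplus F^1K$. Throughout, write $v = r(v){{o}}+v^1$ and $w = r(w){{o}}+w^1$ with $v^1,w^1\in F^1 K$.

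First, I would apply \eqref{eq:iter} in the form $\langle w,v\rangle = \langle v,sw\rangle$ to obtain
\[
\{v,w\} = \langle v,w\rangle - \langle v,sw\rangle = -\langle v,(s-1)w\rangle.
\]
Using $(s-1){{o}} = \tilde{\omega}$ and linearity, $(s-1)w = r(w)\tilde{\omega} + (s-1)w^1$, so
\[
\{v,w\} = -r(w)\langle v,\tilde{\omega}\rangle - \langle v,(s-1)w^1\rangle = r(w)d(v) - \langle v,(s-1)w^1\rangle,
\]
by the definition of $d$.

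Next I would analyze the remaining term. Since $w^1\in F^1K = \ker(s-1)^2$, the element $u := (s-1)w^1$ lies in $F^2K$ and satisfies $su = u$. The $F^1$-part of $v$ contributes nothing: by the argument in the proof of Proposition \ref{negativeintersectionform}, $\langle v^1,u\rangle = 0$ since $F^2V$ lies in the radical of $\langle-,-\rangle|_{F^1V}$. Hence $\langle v,(s-1)w^1\rangle = r(v)\langle {{o}},(s-1)w^1\rangle$. To evaluate this scalar, I expand and apply \eqref{eq:iter} with $s{{o}} = {{o}}+\tilde{\omega}$:
\[
\langle {{o}},s w^1\rangle = \langle w^1,{{o}}\rangle,\qquad \langle {{o}},w^1\rangle = \langle w^1,s{{o}}\rangle = \langle w^1,{{o}}\rangle + \langle w^1,\tilde{\omega}\rangle,
\]
so that $\langle {{o}},(s-1)w^1\rangle = -\langle w^1,\tilde{\omega}\rangle = d(w^1) = d(w)$ (using $d({{o}}) = -\langle {{o}},\tilde{\omega}\rangle = 0$). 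Assembling the pieces yields
\[
\{v,w\} = r(w)d(v) - r(v)d(w),
\]
which is exactly \eqref{eq:anti}.

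For \eqref{eq:anti2}, assume $r(v),r(w)\neq 0$ and unwind the definitions: $\tilde{\eta}_v - \tilde{\eta}_w = v^1/r(v) - w^1/r(w)\in F^1V$. For any $x\in F^1V$, the intersection pairing satisfies $(\bar x,\omega) = -\langle x,\tilde{\omega}\rangle = d(x)$, hence
\[
(\eta_v-\eta_w,\omega) = d(v^1)/r(v) - d(w^1)/r(w) = d(v)/r(v) - d(w)/r(w),
\]
and multiplying by $r(v)r(w)$ recovers \eqref{eq:anti}. The only mildly delicate point is the computation of $\langle {{o}},(s-1)w^1\rangle$ in step two; everything else is bookkeeping given Definition \ref{def:delta} and the filtration properties from Lemma \ref{filtration}.
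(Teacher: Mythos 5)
Your proof is correct and follows essentially the same route as the paper's: both hinge on the identity $\{v,w\}=-\langle v,(s-1)w\rangle$, the decomposition $K=\Z o\oplus F^1K$, and the vanishing of $\{-,-\}$ on $F^1K$. The only difference is cosmetic -- the paper uses bilinearity and antisymmetry to reduce to the single case $(o,w)$ with $w\in F^1K$, whereas you expand both arguments fully and track the cross terms directly.
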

\begin{proof} Since $\langle-,-\rangle$ is symmetric on $F^1K$, $\{-,-\}$ is zero on $F^1K$. So we only have to
consider the case $v={{o}}$ and $w\in F^1K$. But then
\begin{align*}
\{{{o}},w\}&= \langle {{o}},w\rangle-\langle w,{{o}}\rangle\\
&=\langle w,(s-1){{o}}\rangle\\
&=-(\omega,w)\\
&=-d(w)
\end{align*}
which is indeed equal to the righthand side of \eqref{eq:anti}. To verify \eqref{eq:anti2} we note that 
\begin{align*}
d(v)r(w)-d(w)r(v)&=
(\eta_v,\omega)r(v)r(w)-(\eta_w,\omega)r(w)r(v)\\
&=r(v)r(w)(\eta_v-\eta_w,\omega)\qed
\end{align*}
\def\qed{}\end{proof}
Our definitions coincide with the usual notions in the geometric case.
\begin{lemmas}\label{lem:rank-degree}
Let $X$ be a smooth projective surface such that $\delta(X)\neq 0$. Put $K=K(X)_\text{num}$. 
Let ${{o}}=[\r{O}_X]$. Then for a coherent sheaf $ \r{F}$ on $X$, we have
\begin{enumerate}
\item
$r(\r{F})=\rk(\r{F})$.
\item
\label{item:gen}
$d(\r{F})=c_1(\r{F})\cdot c_1(\omega_X)$.
\end{enumerate}
Moreover
\begin{enumerate}
\setcounter{enumi}{2}
\item
\label{item:delta}
$\delta(K)=\delta(X)$.
\end{enumerate}
\end{lemmas}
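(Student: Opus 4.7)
The plan is to pin down the canonical element $\tilde\omega$ explicitly and then read off all three claims from Corollary \ref{canonical=codimension}, Lemma \ref{lem:eulerform-picardform}, and Definition \ref{def:delta}. The key preliminary is to observe that, because $X$ is a surface, Serre duality $\chi(\r{F},\r{G}\tr\omega_X)=\chi(\r{G},\r{F})$ (no sign, since $\dim X=2$) combined with the defining relation $\bl{v}{sw}=\bl{w}{v}$ forces $s[\r{O}_X]=[\omega_X]$, so that
\[
\tilde\omega=(s-1)o=[\omega_X]-[\r{O}_X].
\]

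For part (1), Corollary \ref{canonical=codimension} identifies the rational filtration $F^\bullet V$ of Lemma \ref{filtration} with the classical codimension filtration; intersecting with $K$ shows that $F^1 K$ is precisely the kernel of $\rk$. The canonical decomposition $K=\Z o\oplus F^1K$ therefore reads $[\r{F}]=\rk(\r{F})[\r{O}_X]+([\r{F}]-\rk(\r{F})[\r{O}_X])$, yielding $r(\r{F})=\rk(\r{F})$. Part (3) is then a one-line computation,
\[
\delta(K)=(\omega,\omega)=-\langle\tilde\omega,\tilde\omega\rangle=-\bl{[\omega_X]-[\r{O}_X]}{[\omega_X]-[\r{O}_X]}=c_1(\omega_X)^2=\delta(X),
\]
by Lemma \ref{lem:eulerform-picardform} applied with $\r{L}=\r{L}'=\omega_X$.

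For part (2) I would use $\bl{[\r{O}_X]}{\tilde\omega}=0$ (Definition \ref{def:delta}) to rewrite
\[
d(\r{F})=-\bl{[\r{F}]}{\tilde\omega}=-\bl{[\r{F}]-\rk(\r{F})[\r{O}_X]}{[\omega_X]-[\r{O}_X]}.
\]
Both sides of the desired identity are additive in $[\r{F}]$ and vanish on $F^2K$ (which corresponds to classes supported in codimension $2$, on which $c_1$ is trivial), so they descend to $\Num(K)$. Under the isomorphism $\Phi$ of Lemma \ref{lem:picardformcoincides}, $[\r{F}]-\rk(\r{F})[\r{O}_X]$ corresponds to $c_1(\r{F})$ and $\tilde\omega$ to $c_1(\omega_X)$, and the equality reduces to the compatibility of $\Phi$ with intersection forms established in that lemma.

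The main obstacle is purely bookkeeping: one must keep several sign conventions straight (the minus in the intersection form, the vanishing $\bl{o}{\tilde\omega}=0$, and the cancellation of the shift-sign that occurs precisely because $\dim X=2$), and one must legitimise in (2) the extension of $\Phi$ beyond line bundles, which is done by passing to $\Num(K)$ and using the factorisation of $c_1$ through $K(X)/F^2K(X)$ (equivalently, taking a vector-bundle resolution and replacing each bundle by its determinant modulo $F^2$). Modulo these points, each claim is a short reformulation of Lemma \ref{lem:eulerform-picardform} or Corollary \ref{canonical=codimension}.
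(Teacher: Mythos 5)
Your proposal is correct and follows essentially the same route as the paper: identify $\tilde\omega=(s-1)[\r{O}_X]=[\omega_X]-[\r{O}_X]$, match $r$ with $\rk$ via the codimension filtration, and reduce (2) to Lemma \ref{lem:eulerform-picardform} by additivity on line bundle classes. The only cosmetic difference is that you compute $\delta(K)=-\bl{\tilde\omega}{\tilde\omega}$ directly, whereas the paper deduces (3) from (2) via $\delta(K)=d(\omega_X)$; both are one line.
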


\begin{proof}
The functions $\rk$ and $r$ are both zero on $F^1K$ and satisfy $1=\rk([\r{O}_X])=r([\r{O}_X])$. It follows that they must coincide.

If is sufficient to prove \eqref{item:gen} on generators of $K(X)_{\num}$. Hence we may assume that $\r{F}$ is a line bundle. In this case, since the difference of two line bundles always lies in $F^1(K)$, the decomposition from \S \ref{sec:rank} of $[\r{F}]\in K$ takes the form $[\r{F}]=[\r{O}_X]+([\r{F}]-[\r{O}_X])$
so that $[\r{F}]^1=([\r{F}]-[\r{O}_X])$. Now, the canonical element $\tilde{\omega}$ associated to $o$ is given by
\[
\tilde{\omega}=(s-1){{o}}=(s-1)[\r{O}_X]=[\omega_X]-[\r{O}_X]
\]
Hence $d(\r{F})=([\r{F}]^1,\tilde{\omega})=-\langle [\r{F}]-[\r{O}_X],[\omega_X]-[\r{O}_X]\rangle=
c_1(\r{F})\cdot c_1(\omega_X)$, using Lemma  \ref{lem:eulerform-picardform}.
Finally, since $\delta(K)=d(\omega_X)$, \eqref{item:delta} follows from (\ref{item:gen}).
\end{proof}
\section{Exceptional bases and mutation}
\label{sec:exmut}  
We recall some standard facts about mutation. See e.g.\ \cite{BondalSymplectic}. 
Throughout $K=(K,\langle-,-\rangle,s)$ is a Serre lattice of rank $n$ as defined in \S\ref{def:lattice}.
\begin{definition}\label{def:exceptionalbasis}
An element $e \in K$ is exceptional if $ \bl{e}{e}=1$. An exceptional pair in $K$ is a pair of exceptional elements $(v,w)\in K\times K$
such that $\langle w,v\rangle=0$.
A basis $(e_1, \ldots e_n)$ for $K$ is  exceptional if the $e_i$ are exceptional and $\bl{e_i}{e_j}=0$ for $j<i$.
A helix in $K$ is a sequence $(e_i)_{i \in \Z}$ such that  $\forall k: e_{k+n}=s^{-1}e_k$ and such that $(e_1,\ldots,e_n)$ is an exceptional basis.
\end{definition}
Every exceptional basis can be extended to a helix.
It is easy to see that every ``thread'' $(e_k,\ldots,e_{k+n-1})$ in a helix is an exceptional basis. 
\begin{definition}\label{def:braidgroupactions2}
If $(v,w)$ is an exceptional pair, the left mutation of $(v,w)$ is defined as 
\[
\sigma(v,w)\stackrel{\textrm{def}}{=}(w-\bl{v}{w}v,v).
\]
For an exceptional basis $E=(e_1, \ldots , e_n)$,  and $i \in \{1\,\ldots ,n-1\}$ we define 
 the left mutation at $i$ as
$$\sigma_i(E)\stackrel{\textrm{def}}{=} (e_1, \ldots,\sigma(e_i,e_{i+1}), \ldots, e_n)$$
\end{definition}
It is well known that the $(\sigma_i)_{\{1,\ldots , n-1\}}$ define an action of the braid group $B_n$ on the set of exceptional bases
in $K$ \cite{BondalSymplectic}.  If $E=(e_1,\ldots,e_n)$ is an exceptional basis for $K$ with corresponding
helix $H=(e_i)_{i\in \d{Z}}$ then we denote by $\rho(H)$  the right shift $(e_{i+1})_{i \in \d{Z}}$ of $H$. Looking at the initial  thread yields an operation on $E$ given by
$
\rho(E)=(se_n,e_1,\ldots,e_{n-1})
$.
 One checks that in fact
\[\rho(E)=\sigma_1\cdots\sigma_{n-1}(E).\] In particular $\rho(E)$ is contained in the $B_n$-orbit of $E$.

If we put $\rho=\sigma_1\cdots\sigma_{n-1}\in B_n$ then we have
$\sigma_{i}=\rho^{i-1}\sigma_1\rho^{-i+1}$ for $i=1,\dots,n-1$.
This allows us to extend the definition of $\sigma_i$ for any $i \in \d{Z}$ by
\[
\sigma_{i}=\rho^{i-1}\sigma_1\rho^{-i+1}, \,\,\forall i \in \d{Z}.
\]

The elements $(\sigma_i)_{i\in \Z}\in B_n$ act
naturally on helices changing $e_{i+kn}$ and $e_{i+kn+1}$ for $k\in \Z$. If $E=(e_1,\ldots,e_n)$ is an exceptional basis then so is 
\[
\epsilon_i(E)\define(e_1,\ldots,e_{i-1},-e_i,e_{i+1},\ldots,e_n)
\]
This shows that the set of exceptional bases actually admits an action of the \emph{signed} braid group $\Sigma B_n\define B_n\# (\Z/2\Z)^n$
where $B_n$ acts on $(\Z/2\Z)^n$ through its quotient~$S_n$ by the pure braid group. The operators $\epsilon_i$ also act on
helices $H=(e_j)_{j\in \d{Z}}$, changing the sign of the elements $(e_{i+kn})_k$.
Finally, if $M$ is the Gram matrix of $E$ and $\sigma \in \Sigma B_n$, we denote $\sigma(M)$ the Gram matrix of $\sigma(E)$. In this way we also obtain an action of $\Sigma B_n$ on the set of exceptional matrices. As some of the verifications below are best done by computer
we record the well-known formulas for this action: For $k<l$ we have
\[
\sigma_i(M)_{kl}=
\begin{cases}
M_{kl}&\text{if $\{k,l\}\cap \{i,i+1\}=\emptyset$}\\
M_{k,i+1}-M_{i,i+1}M_{ki}&\text{if $l=i$}\\
M_{k,i}&\text{if $l=i+1,k\neq i$}\\
M_{i+1,l}-M_{i,i+1}M_{il}&\text{if $k=i,l\neq i+1$}\\
M_{i,l}&\text{if $k=i+1$}\\
-M_{i,i+1}&\text{if $k=i$, $l=i+1$}
\end{cases}
\]
\[
\sigma_i^{-1}(M)_{kl}=
\begin{cases}
M_{kl}&\text{if $\{k,l\}\cap \{i,i+1\}=\emptyset$}\\
M_{k,i+1}&\text{if $l=i$}\\
M_{k,i}-M_{i,i+1}M_{k,i+1}&\text{if $l=i+1,k\neq i$}\\
M_{i+1,l}&\text{if $k=i,l\neq i+1$}\\
M_{il}-M_{i,i+1}M_{i+1,l}&\text{if $k=i+1$}\\
-M_{i,i+1}&\text{if $k=i$, $l=i+1$}
\end{cases}
\]
\[
\epsilon_i(M)_{kl}=
\begin{cases}
-M_{kl}&\text{if $i\in\{k,l\}$}\\
M_{kl}&\text{otherwise}
\end{cases}
\]
Assume that $K$, $K'$ are Serre lattices with exceptional bases
$(e_i)_i$, $(e'_i)_i$. Below it will be convenient to introduce the
notation $(K,(e_i)_i)\overset{\Sigma B_n}{\cong} (K',(e'_i)_i)$  to indicate that there is an isomorphism of
Serre lattices $\phi:K\rightarrow K'$ such that $\phi(e'_i)_i$ is in the $\Sigma B_n$ orbit of $(e'_i)_i$.
\section{Numerical blowing up/down}
\subsection{More general codimension filtrations}
We have defined the codimension filtration for a Serre lattice $K$ which is of
surface* type. However in this section we will discuss numerical analogues of blowing up and blowing down. These procedures will change the degree of $K$, which in particular may become zero. Assuming that $K$ is of surface* type is thus not very natural in this context. Therefore we introduce 
a generalized version of the codimension filtration:
\begin{definitions}
Let $K=(K,\langle-,-\rangle,s)$ be a lattice of rank $n$ of surface type and put $V=K_{\Q}$. A codimension filtration
on $V$ is a filtration
\[
0=F^3V\subset F^2V\subset F^1V\subset F^0V=V
\]
such that 
\begin{enumerate}
\item \label{cod1} $(s-1)F^iV\subset F^{i+1}V$.
\item \label{cod2} $\dim F^1V=n-1$, $\dim F^2V=1$.
\item \label{cod3} $\langle F^1V,F^2 V\rangle=0$.
\end{enumerate}
\end{definitions}
Note that \eqref{cod1} implies that the $F^i V$ are $s$-invariant. 
It follows from Lemma \ref{filtrationunique} that if $K$ is of surface* type then a codimension filtration exists
and coindices with the filtration defined in Lemma \ref{filtration}. In particular it is also unique.

A codimension filtration $(F^i V)_i$ is determined by its induced
filtration $F^iK=K\cap F^iV$ on $K$. We will  refer to the latter also
as a codimension filtration. The results and definitions introduced in
the surface* case remain valid for arbitrary codimension filtrations. We will however decorate our notations with an
index $F$, indicating the choice of a codimension filtration (which can only be a real choice if $\delta(K)=0$).
\subsection{Numerically blowing up}\label{subsec:nb}
Let $K=(K,\langle-,-\rangle,s)$ be of surface type with a codimension filtration $(F^i K)_i$. Let $z\in F^2 K$ (in particular $sz=z$). Then the blowup $\tilde{K}=(\tilde{K},\langle-,-\rangle,\tilde{s})$ of $K$ in $z$ is defined as follows.
Put $\tilde{K}\define \Z f\oplus K$ and extend $\langle-,-\rangle$ to $\tilde{K}$ via: 
\begin{align*}
\langle f,f\rangle&=1\\
\langle -,f\rangle&=0\\
\langle f,y\rangle&=\langle z,y\rangle
\end{align*} 
One checks that $\tilde{K}$ has a Serre automorphism $\tilde{s}$ given by
\begin{align*}
\tilde{s}y&=sy-\langle y,z\rangle f\qquad \text{for $y\in K$}\\
\tilde{s}f&=f+z
\end{align*}
and furthermore that the codimension filtration $F$ on $K$ extends to a codimension filtration $\tilde{F}$ on $\tilde{K}$ via
\begin{align*}
F^1\tilde{K}&=F^1K\oplus \Z f\\
F^2\tilde{K}&=F^2K
\end{align*}
We immediately see that we have an orthogonal decomposition
\[
\Num_F(\tilde{K})\define \big(F^1K\ds \d{Z} F\big)/F^2K\cong \d{Z} F\ds F^1K/F^2K\define\Z f\oplus \Num_F(K)
\]
\begin{lemmas} \label{deltablowup} We have
$
\delta_F(\tilde{K})=\delta_F(K)-\langle o,z\rangle^2
$.
\end{lemmas}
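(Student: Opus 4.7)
The plan is to read off the canonical element in $\tilde{K}$, express it in the orthogonal decomposition of $\Num_{\tilde F}(\tilde K)$, and then compute its self-intersection.

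First I would pick a structure element $o\in K$ for $K$ and argue that the same $o$, viewed inside $\tilde K=\mathbb{Z}f\oplus K$, is a structure element for $\tilde K$. This follows because $F^1\tilde K=F^1K\oplus\mathbb{Z}f$, so the natural map $K/F^1K\to \tilde K/F^1\tilde K$ is an isomorphism and takes $\bar o$ to a generator. Consequently the canonical element $\tilde{\tilde\omega}$ of $\tilde K$ associated to $o$ is
\[
\tilde{\tilde\omega}=(\tilde s-1)o=so-\langle o,z\rangle f-o=\tilde\omega-\langle o,z\rangle f,
\]
using the formula $\tilde sy=sy-\langle y,z\rangle f$ from \S\ref{subsec:nb}. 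Passing to $\Num_{\tilde F}(\tilde K)=\mathbb{Z}\bar f\oplus \Num_F(K)$, the canonical class of $\tilde K$ is $\tilde\omega_{\tilde K}=\omega-\langle o,z\rangle \bar f$, where $\omega$ denotes the canonical class of $K$.

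Next I would verify that the decomposition $\Num_{\tilde F}(\tilde K)=\mathbb{Z}\bar f\oplus \Num_F(K)$ is orthogonal for the intersection form. By construction $\langle y,f\rangle=0$ for all $y\in \tilde K$, and $\langle f,y\rangle=\langle z,y\rangle$ for $y\in K$. Thus for $y\in F^1K$ one needs $\langle z,y\rangle=0$, and this follows from axiom \eqref{cod3} of a codimension filtration, namely $\langle F^1K,F^2K\rangle=0$, together with the symmetry relation $\langle w,v\rangle=\langle v,sw\rangle=\langle v,w\rangle$ valid for $w\in F^2K$ (since $(s-1)F^2K\subset F^3K=0$ forces $sw=w$). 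Hence $\bar f$ is orthogonal to $F^1K/F^2K$ in $\tilde K$, and $(\bar f,\bar f)=-\langle f,f\rangle=-1$.

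Finally I compute
\[
\delta_{\tilde F}(\tilde K)=(\tilde\omega_{\tilde K},\tilde\omega_{\tilde K})=(\omega,\omega)+\langle o,z\rangle^2 (\bar f,\bar f)=\delta_F(K)-\langle o,z\rangle^2,
\]
using orthogonality in the middle step. There is no real obstacle here: the only subtlety is the orthogonality check in the previous paragraph, which amounts to unpacking the definition of a codimension filtration; everything else is a direct substitution once the right structure element of $\tilde K$ is identified.
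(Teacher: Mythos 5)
Your proof is correct and follows essentially the same route as the paper: identify $o$ as a structure element of $\tilde K$, compute $\tilde\omega_{\tilde K}=\tilde\omega-\langle o,z\rangle f$, and expand the self-pairing. The only cosmetic difference is that you kill the cross-term via the orthogonality of the decomposition $\Num_{\tilde F}(\tilde K)\cong \mathbb{Z}\bar f\oplus\Num_F(K)$ (which the paper records just before the lemma), whereas the paper expands $\langle\tilde\omega_F,\tilde\omega_F\rangle$ directly and checks $\langle z,\tilde\omega\rangle=\langle (s^{-1}-1)z,o\rangle=0$ using $sz=z$ — the same fact you invoke.
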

\begin{proof}
Since there is a collision of notation with the use of 
$\tilde{\omega}$ we will temporarily write $\omega_F\in F^1K$ for  the canonical element of $K$.
Clearly if $o\in K$ is a structure element in $K$ following Definition \ref{def:delta} then it remains one in $\tilde{K}$. We compute the canonical element in $\tilde{K}$: 
\[
\tilde{\omega}_F=(\tilde{s}-1)o=\omega_F-\langle o,z\rangle f
\]
Thus
\begin{align*}
\langle \tilde{\omega}_F,\tilde{\omega}_F\rangle&=\langle\omega_F,\omega_F\rangle-\langle o,z\rangle \langle z,\omega_F\rangle+\langle o,z\rangle^2\\
&=\langle\omega_F,\omega_F\rangle+\langle o,z\rangle^2
\end{align*}
where we have used
\[
\langle z,\omega_F\rangle=\langle z,(s-1)o\rangle=\langle (s^{-1}-1) z,o\rangle=0\qed
\]
\def\qed{}\end{proof}
Not that if $(e_1,\ldots,e_n)$ is exceptional basis with Gram matrix $M$ then $(f,e_1,\ldots,e_n)$ is an exceptional
basis for $\tilde{K}$ and the corresponding Gram matrix $\tilde{M}$ is given by
\[
\tilde{M}=
\begin{bmatrix}
1 &\langle z,e_1\rangle&\cdots&\langle z,e_n\rangle\\
0 &\\
\vdots && M\\
0&
\end{bmatrix}
\]
\subsection{Numerical blowup of \mathversion{bold} $\mathbb{P}^2$}
\label{sec:blowupP2}
Let $K\define K(\mathbb{P}^2)$. Then $K$ is equipped with the exceptional basis $e_2,e_3,e_4$ coming from the 
exceptional sequence \[\bigg(\mathcal{O}(-1), \Omega(1),\mathcal{O}\bigg).\]
Let $x\in \mathbb{P}^2$. We  have $F^2K=\Z [\mathcal{O}_x]$ and so it is possible to perform a numerical
blowup of $K$ at $z=n[\mathcal{O}_x]$, $n\in \Z$.
By a sign change we may assume $n\ge 0$.
Below we let $K_n$ be the blowup of $K$ at $n[\mathcal{O}_x]$ for $n\ge 0$. We will always implicitly assume that $K_n$
is equipped with the exceptional basis $(f,e_2,e_3,e_4)$. The Gram matrices of $K_n$ are as
in the second series of solutions in Theorem \ref{mainth}.  It follows by Lemma \ref{deltablowup} 
that indeed $\delta(K_n)=9-n^2$, as claimed in the introduction.
\subsection{Numerically blowing down}
\label{blowingdown}
It turns out the numerical blowup construction from \S \ref{subsec:nb} is reversible:
\begin{lemma}\label{lem:blowdown}
Let $K=(K,\langle-,-\rangle,s)$ be a lattice of surface type equipped with
a codimension filtration and $f \in F^1K$. Define $\bar{K}=({}^\perp f,\bl{-}{-})$ and let
\begin{align*}
\bar{s}y&=sy+\langle y,z\rangle f\qquad \text{for $y\in \bar{K}$}\\
F^1\bar{K}&=F^1K\cap \bar{K}\\
F^2\bar{K}&=F^2K
\end{align*}
Then $(\bar{K},\bl{-}{-},\bar{s})$ is a lattice of surface type with codimension filtration $F^i\bar{K}$. Moreover, the numerical blowup of $\bar{K}$at $z\define (\bar{s}-1)f$ is precisely $K$.\qed
\end{lemma}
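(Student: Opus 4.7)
The proof proceeds by direct verification: the reverse construction is determined by inverting the forward blowup formulas $\tilde s f = f + z$ and $\tilde s y = sy - \bl{y}{z} f$, so the calculations amount to checking that these inversions are consistent with the data on $K$. I implicitly assume the exceptionality $\bl{f}{f} = 1$, which is needed to obtain the splitting $K = \bar K \oplus \Z f$ and non-degeneracy of the restricted form on $\bar K$.

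First I would set $z := (s-1)f$. Since $f \in F^1 K$, axiom (1) of a codimension filtration gives $z \in F^2 K$. Using that $F^2 V$ is $s$-invariant and the Serre identity, one verifies $\bl{F^1 K}{F^2 K} = 0$ in both orders (cf.\ the proof of Proposition~\ref{negativeintersectionform}); in particular $\bl{f}{z} = \bl{z}{f} = 0$, so $z \in \bar K$ and hence $z \in F^2 \bar K$. To reconcile with the formulation $z = (\bar s - 1)f$ in the statement, one extends the defining formula of $\bar s$ to $f$ and uses $\bl{f}{z} = 0$ to compute $\bar s f = sf + \bl{f}{z} f = sf = f + z$.

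Next I would check that $\bar s$ restricts to a Serre automorphism of $\bar K$ and that $(F^i \bar K)_i$ is a codimension filtration. The Serre identity for $s$ gives $\bl{y}{sf} = \bl{f}{y}$, which combined with $sf = f + z$ yields $\bl{y}{z} = \bl{f}{y}$ for $y \in \bar K$. A short computation then shows $\bl{\bar s y}{f} = \bl{sy}{f} + \bl{f}{y}\bl{f}{f} = 0$, so $\bar K$ is preserved, and the Serre identity $\bl{v}{\bar s w} = \bl{w}{v}$ follows because the extra term $\bl{f}{w}\bl{v}{f}$ vanishes for $v \in \bar K$. For the filtration, the only non-trivial point is $(\bar s - 1) F^1 \bar K \subset F^2 \bar K$: for $y \in F^1 \bar K$, $(\bar s - 1)y = (s-1)y + \bl{f}{y}f$, where $(s-1)y \in F^2 K = F^2 \bar K$ and $\bl{f}{y} = 0$ because the Euler form is symmetric on $F^1 K$ (forcing $\bl{f}{y} = \bl{y}{f} = 0$). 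The dimension conditions and orthogonality $\bl{F^1 \bar K}{F^2 \bar K} = 0$ follow from the splitting $F^1 K = F^1 \bar K \oplus \Z f$ and the corresponding statements for $K$.

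Finally, to show that $K$ is the numerical blowup of $\bar K$ at $z$, I would define $\phi : \tilde K \to K$ by $\tilde f \mapsto f$ and $\phi|_{\bar K} = \mathrm{id}$. Preservation of the bilinear form reduces to the single nontrivial identity $\bl{\tilde f}{y}_{\tilde K} = \bl{z}{y} = \bl{f}{y}$ for $y \in \bar K$; here $\bl{z}{y} = \bl{y}{z}$ because $\{z, y\} = 0$ (both the rank and degree of $z \in F^2 K$ vanish, by Proposition~\ref{prop:anti}). Compatibility of the Serre automorphisms $\phi \tilde s = s \phi$ is an unravelling of definitions once one observes $sf = f + z$, and the codimension filtrations match by construction. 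The main obstacle is disentangling the seemingly circular definition of $\bar s$ and $z$; once the identities $\bl{y}{z} = \bl{f}{y}$ and $\bl{z}{y} = \bl{y}{z}$ are in hand, every remaining verification is a one-line calculation.
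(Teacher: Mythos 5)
Your proof is correct. The paper in fact gives no proof of this lemma at all (it is stated as a routine reversal of the blowup construction), and your verification is precisely the intended one: you resolve the apparent circularity in the definitions of $z$ and $\bar{s}$ by setting $z=(s-1)f$ first and checking consistency, and the key identities $\langle y,z\rangle=\langle f,y\rangle$ and $\langle z,y\rangle=\langle y,z\rangle$ (the latter also follows in one line from $sz=z$ and the Serre identity) do make every remaining step a short computation. You are also right to flag that the statement implicitly assumes $\langle f,f\rangle=1$; this hypothesis is missing from the lemma as printed but is needed for the splitting $K=\bar{K}\oplus\Z f$ and is satisfied in the paper's only application, where $f=e_1$ is a member of an exceptional basis.
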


\section{The case of rank {\boldmath $4$}}
\label{sec:rk4}
In this section we give the proof Theorem \ref{mainth}. To this end we recall some notation introduced in \S \ref{sec:introduction}. We will fix a lattice of surface type $(K,\bl{-}{-},s)$ and assume it has an exceptional basis $E\define(e_1,e_2,e_3,e_4)$ with Gram matrix
\[
M=
\begin{bmatrix}
1&a&b&c\\
0&1&d&e\\
0&0&1&f\\
0&0&0&1
\end{bmatrix}
\]
We recall from formula \ref{eq:sformula} that the matrix for $s$ is given by $s=M^{-1}M^{\mathrm{t}}$ and that by (\ref{eq:rk4}), the unipotency of $s$ translates into 
\begin{equation}
\label{eq:rk4new}
\begin{cases}
acdf-abd-ace-bcf-def+a^2+b^2+c^2+d^2+e^2+f^2=0\\
af-be+cd=0
\end{cases}
\end{equation}
 We will prove Theorem \ref{mainth} in several steps:
\begin{enumerate}
\item We first show that any Serre lattice $K$ of rank $\le 4$ with unipotent Serre automorphism is of surface type.
\item We treat the case $\delta(K)=0$ through an adhoc argument starting directly from the equations \eqref{eq:rk4new}, reducing us to the case where $K$ is of surface* type.
\item We show that by performing appropriate mutations on an exceptional basis $(e_1,e_2,e_3,e_4)$ we may always reduce to to one of the following situations: (1.) $e_1$ has rank zere or  (2.) $\langle e_1,e_2\rangle=2=\langle e_3,e_4\rangle=2$.
\item By \S\ref{sec:blowupP2}, Case (1.) corresponds to a numerical blowup of $K(\mathbb{P}^2)$.
\item Case (2.) is treated again through an adhoc argument starting from \eqref{eq:rk4new}.
\item Finally we check that all solutions are different, mostly relying on the value of $\delta(K)$.
\end{enumerate}
\subsection{Preliminaries}
We first note that in low rank the second condition in Definition \ref{def:SPStype} is redundant
and hence in particular the lattice $K$ in Theorem \ref{mainth} is of surface type.
\begin{lemmas}\label{lem:SPStyperank4}
Let $K=(K,\langle-,-\rangle,s)$ be a Serre lattice of rank $\le 4$. If $s$ is unipotent then
$K$ is of surface type.
\end{lemmas}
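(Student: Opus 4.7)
The plan is to show the stronger fact that for any Serre lattice (not necessarily low rank), the quantity $\rk(s-1)$ is always \emph{even}, and then combine this with the trivial upper bound $\rk(s-1)\le \rk K - 1$ coming from unipotency. Together these immediately give $\rk(s-1)\le 2$ whenever $\rk K\le 4$.

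For the parity statement, consider the antisymmetrization
\[
\{v,w\}\define \langle v,w\rangle-\langle w,v\rangle
\]
already introduced in Proposition \ref{prop:anti}. Using the Serre relation $\langle v,sw\rangle=\langle w,v\rangle$ one obtains
\[
\{v,w\}=-\langle v,(s-1)w\rangle,
\]
so $\{-,-\}$ factors through $s-1$. Since $\langle-,-\rangle$ is non-degenerate, $w$ lies in the radical of $\{-,-\}$ if and only if $(s-1)w=0$, hence
\[
\rad\{-,-\}=\ker(s-1).
\]
Passing to $K_\Q$ we conclude that the rank of the alternating form $\{-,-\}$ on $K_\Q$ equals $\rk(s-1)$. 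But any alternating bilinear form on a $\Q$-vector space has even rank, whence $\rk(s-1)$ is even.

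Now we combine. Unipotency of $s$ forces $\ker(s-1)\neq 0$ (any nonzero fixed vector of the Jordan decomposition works), so $\rk(s-1)\le \rk K - 1 \le 3$. Together with parity this yields $\rk(s-1)\in\{0,2\}$, which is condition (2) of Definition \ref{def:SPStype}. Condition (1) is assumed, so $K$ is of surface type.

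I do not expect any serious obstacle: the entire argument is a one-line observation about the antisymmetric part of the Euler form, and the low-rank hypothesis is used only as the combinatorial statement $\rk(s-1)\le 3$. It is worth noting that this parity argument also explains a posteriori why in rank $3$ the only Jordan types compatible with the Serre structure are $(1,1,1)$ and $(3)$, a fact implicitly used in Lemma \ref{suni1}.
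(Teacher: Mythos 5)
Your proposal is correct and is essentially the paper's own argument: the paper likewise identifies $\ker(s-1)$ with the radical of the antisymmetrization $\{-,-\}$, deduces that $\rk(s-1)$ is even from the nondegenerate alternating form induced on $V/\ker(s-1)\cong\im(s-1)$, and rules out $\rk(s-1)=4$ because $s-1$ is nilpotent (your ``nonzero fixed vector'' bound is the same observation). No gaps.
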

\begin{proof}
We need to show that $\rk(s-1)\le 2$ on $K$. We will work in the $\mathbb{Q}$-vector space $V=K_{\mathbb{Q}}$.
Since we have $$\{v, w\}\stackrel{\textrm{def}}{=}\bl{w}{v}-\bl{v}{w}=\bl{v}{(s-1)w}$$ it follows immediately that  $\ker(s-1)$ is the radical of $\{-,-\}$.
 Hence $\im(s-1)\cong  V/\ker(s-1)$ is endowed with a nondegenerate antisymmetric form and hence must be even dimensional. As $(s-1)$ is nilpotent, it cannot be surjective and 
therefore $\dim\im(s-1)\neq 4$. It follows that $\dim\im(s-1)\le 2$.
\end{proof}
\subsection{The degree zero case} 
We now dispense with the case that $K$ is of surface but not of surface* type, or put differently the case 
$\delta(K)=0$. To this end recall from \S \ref{sec:blowupP2} that $K_3$ denotes the numerical blowup of $K(\d{P}^2)$ at $3[\r{O}_x]$ for $x\in \d{P}^2$
\begin{lemmas}
Let $K=(K,\langle-,-\rangle,s)$ be a Serre lattice of rank 4 with unipotent Serre automorphism $s$ and an exceptional 
basis $(e_i)_i$. If $(s-1)^2=0$ then $(K,(e_i)_i)\overset{\Sigma B_4}{\cong} K_3$.
\end{lemmas}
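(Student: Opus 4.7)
The plan is to reduce this case to the rank~3 Markov classification via the blowdown machinery of Lemma \ref{lem:blowdown}.

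First, I establish a generalized codimension filtration on $K$ in the sense of Section~5. Since $(s-1)^2=0$, the filtration of Lemma \ref{filtration} degenerates ($\ker(s-1)^2=V$), but a codimension filtration in the generalized sense should still exist. The antisymmetrization $\{v,w\}\define \bl{v}{(s-1)w}$ has radical $\ker(s-1)$ and induces a nondegenerate symplectic form on $V/\ker(s-1)$, forcing the latter to be even-dimensional. Excluding the degenerate case $s=\Id$ (where $M=I$), we have $W\define\im(s-1)=\ker(s-1)$ of dimension $2$. The key point is that $\bl{-}{-}|_W$ is degenerate, which needs to be established (for instance, by exhibiting a nontrivial radical in $W$ using the exceptional basis structure, as happens for $K_3$ where $[\r{O}_x]$ lies in the radical); picking $F^2V\subseteq W\cap W^\perp$ one-dimensional and $F^1V\define (F^2V)^\perp$ then yields a valid codimension filtration.

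Next, I use $\Sigma B_4$-mutations to produce an exceptional $e_1\in F^1K$, i.e., an exceptional basis element of rank zero with respect to the rank function $r$ of \S\ref{sec:rank}. Since $r$ is linear and its behavior is explicit under $\sigma_i^{\pm 1}$ and $\epsilon_i$ via the mutation formulas of \S\ref{sec:exmut}, a Markov-style descent on a height such as $\sum_i |r(e_i)|$ should terminate at a basis with $r(e_1)=0$.

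With $e_1\in F^1K$ exceptional, Lemma \ref{lem:blowdown} produces a blowdown $\bar K\define{}^\perp e_1$, a rank~3 Serre lattice of surface type with unipotent Serre automorphism and exceptional basis $(e_2,e_3,e_4)$. By Markov's theorem (Lemma \ref{suni1} and the discussion around \eqref{eq:rk3}), $(\bar K,(e_2,e_3,e_4))$ is $\Sigma B_3$-equivalent to $K(\d{P}^2)$ with its standard exceptional basis. Reconstructing $K$ as a numerical blowup, $K$ is the blowup of $K(\d{P}^2)$ at some $z=n[\r{O}_x]\in F^2K(\d{P}^2)$. By Lemma \ref{deltablowup}, $\delta(K)=9-n^2$; since $(s-1)^2=0$ forces $\delta(K)=0$, we obtain $n=\pm 3$, and after possibly applying $\epsilon_1$ we conclude $(K,(e_i))\overset{\Sigma B_4}{\cong} K_3$.

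The main obstacle is the coupled problem of (i) establishing degeneracy of $\bl{-}{-}|_W$ so that a codimension filtration exists, and (ii) producing the rank-zero exceptional element via mutation descent. These are linked since $r$ depends on the choice of $F^2V$, so the descent must be coordinated with that choice. An alternative, more computational route would be to expand $(s-1)^2=0$ directly as polynomial relations in the Gram matrix entries $(a,b,c,d,e,f)$, combine with \eqref{eq:rk4new}, and carry out a Markov-style descent on these entries — matching the ``adhoc'' strategy mentioned in the outline at the start of \S\ref{sec:rk4}.
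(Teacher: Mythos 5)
The paper's proof is exactly the ``adhoc'' computational route you mention only as a fallback in your last sentence: from $(s-1)^2=0$ one gets that the matrix $F=-(MM^{-\mathrm{t}}-1)M(M^{-1}M^{\mathrm{t}}-1)$ of the form $(v,w)\mapsto-\langle(s-1)v,(s-1)w\rangle$ vanishes; adding the first equation of \eqref{eq:rk4new} to $F_{00}=0$ and $F_{33}=0$ produces two classical rank-3 Markov equations, $d^2+e^2+f^2-def=0$ and $a^2+b^2+d^2-abd=0$; transitivity of $B_3$ on Markov triples normalizes $d=e=f=3$ via $\sigma_2,\sigma_3$; and the remaining system collapses to $b=2a$, $c=a$, $a^2=9$. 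Your main proposal is a genuinely different, structural route, and its endgame ($\delta(K)=-\langle(s-1)o,(s-1)o\rangle=0$ together with $\delta(K_n)=9-n^2$ forces $n=3$) is a nice observation. But as written it is a plan with the crucial steps asserted rather than proved.

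The central gap is the descent. You claim a Markov-style descent on $\sum_i|r(e_i)|$ ``should terminate at a basis with $r(e_1)=0$.'' The paper's descent machinery (Lemmas \ref{rankreduction}, \ref{rankreduction2}, \ref{lem:sequence} and \ref{lem:cases}) is developed only for surface* lattices, and even there a minimal basis lands in one of \emph{two} cases, the second being $\langle e_1,e_2\rangle=\langle e_3,e_4\rangle=2$ with all $r_i\neq 0$ --- a case that genuinely occurs (it is where $\mathbb{P}^1\times\mathbb{P}^1$ lives). Nothing in your argument rules out terminating in Case 2 when $\delta(K)=0$; ruling it out amounts to redoing the Case 2 computation from \eqref{eq:rk4new}, at which point the structural detour buys nothing over the direct approach. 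Moreover, when $\delta(K)=0$ the codimension filtration is no longer unique, so $r$, $\omega$ and the $\eta_v$ all depend on a choice, and one must re-verify for generalized filtrations the ingredients the descent uses: symmetry and nondegeneracy of the induced form on $\Num$, $\omega\neq 0$, and formula \eqref{eq:anti2}. (Incidentally, your stated obstacle (i) is a non-issue: for $W=\ker(s-1)=\im(s-1)$, any $F^1V=W+\Q v$ with $F^2V=\Q(s-1)v$ automatically satisfies $\langle F^1V,F^2V\rangle=0$, since $\langle x,(s-1)y\rangle$ is the antisymmetrization of $\langle-,-\rangle$ and $W$ is its radical; no degeneracy of $\langle-,-\rangle|_W$ needs to be established. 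The real unaddressed work is obstacle (ii).) Finally, note that both you and the paper silently discard $s=\Id$, i.e.\ $M=I$, which satisfies the hypotheses but is not equivalent to $K_3$; you at least flag it, but neither treatment justifies its exclusion.
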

\begin{proof}
Consider the quadratic form \[K\times K\rightarrow \Z:(v,w)\mapsto -\langle (s-1)v,(s-1)w\rangle\]  Its matrix is given
by $F=-(MM^{-t}-1)M(M^{-1}M^{\mathrm{t}}-1)$. Since $(s-1)^2=0$, and
\[
-\langle (s-1)v,(s-1)w\rangle=-\bl{v}{s^{-1}(s-1)^2w}=0
\] we conclude $F=0$. Using a computer algebra system, we find
\begin{align*}
0=F_{00}&=-a c d f + a b d + a c e + b c f - a^{2} - b^{2} - c^{2}\\
0=F_{33}&=-a c d f + a c e + b c f + d e f - c^{2} - e^{2} - f^{2}
\end{align*}
Let $q_1$ be the lefthand side of the first equation in \eqref{eq:rk4new}. Adding $q_1$ to $F_{00}$ and $F_{33}$ we see that 
\begin{equation}
\label{eq:second}
\begin{aligned}
d^{2} + e^{2} + f^{2}-def&=0\\
 a^{2} + b^{2} + d^{2}-a b d&=0
\end{aligned}
\end{equation}
These are ordinary Markov equations as in \eqref{eq:rk3}. As explained in the introduction, the braid group $B_3$ acts transitively on the set of its solutions so that after applying mutations $\sigma_2,\sigma_3$ we may assume that $d=e=f=3$.
Substituting this in \eqref{eq:rk4new} we obtain
\begin{align*}
a^{2} - 3 a b + 6 a c + b^{2} - 3 b c + c^{2}&=0\\
a - b + c&=0
\end{align*}
This system can be easily solved by substitution and we obtain $b=2a$, $c=a$. Substituting this in the second equation in \eqref{eq:second} (together
with $d=3$) yields $9-a^2=0$. Hence $a=\pm 3$. The case $a=-3$ is obtained from $a=3$ by applying $\epsilon_1$. The Gram matrix becomes
\[
\begin{bmatrix}
1&3&6&3\\
0&1&3&3\\
0&0&1&3\\
0&0&0&1
\end{bmatrix}
\]
which has the required form.
\end{proof}
\subsection{Reducing exceptional pairs} 
We can henceforth assume that the lattice $K=(K,\langle-,-\rangle,s)$ is of surface* type. We fix a structure element
${{o}}\in K$ and use the associated notations as in \S\ref{sec:numer}.
For a collection of elements $(v_1, \ldots , v_n)$ in $K$, we introduce the following invariant.
\begin{equation}
\label{eq:Markovnumber}
\r{M}(v_1, \ldots , v_n )\stackrel{\textrm{def}}{=}\sum_1^n \vert r(v_i) \vert.
\end{equation}
We view $\r{M}$ as a measure for the complexity of $(v_1, \ldots , v_n)$. The next lemma is our main technical tool. 
\begin{lemmas} \cite{Rudakov} \label{rankreduction}
Let $(v,w)$ be an exceptional pair with $r(v),r(w)>0$.
Assume the following two conditions are satisfied 
\begin{itemize}
\item
$(\eta_w-\eta_v, \eta_w-\eta_v)<0$.
\item
$(\eta_w-\eta_v,\omega)<0$.
\end{itemize}
(See \S \ref{sec:rank} for notations.)
Then either $\r{M}(\sigma(v,w))<\r{M}(v,w)$ or  $\r{M}(\sigma^{-1}(v,w))<\r{M}(v,w)$ where $\sigma$ is as in Definition \ref{def:braidgroupactions2}. 
\end{lemmas}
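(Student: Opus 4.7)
Write $p = r(v)$, $q = r(w)$ and $A = \langle v, w\rangle$. The strategy is to extract a single identity linking these quantities to intersection numbers in $\Num(K)$, from which both $A > 0$ and the Markov-style reduction will follow.

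Consider the auxiliary vector $u \define r(w) v - r(v) w$. Using the decompositions $v = p(o + \tilde{\eta}_v)$ and $w = q(o + \tilde{\eta}_w)$ from \S\ref{sec:rank}, one sees $u = pq(\tilde{\eta}_v - \tilde{\eta}_w)$, so $u$ has rank zero and lies in $F^1 K_\Q$. Expanding $\langle u, u\rangle$ bilinearly and using the exceptionality relations $\langle v, v\rangle = \langle w, w\rangle = 1$ and $\langle w, v\rangle = 0$ gives
\[
\langle u, u\rangle = p^2 + q^2 - pq A.
\]
Converting to the intersection form $(-,-) = -\langle -, -\rangle$ on $F^1 K$, which descends unchanged to $\Num(K)$ by Proposition \ref{negativeintersectionform}, yields the master identity
\[
p^2 q^2 (\eta_v - \eta_w, \eta_v - \eta_w) = pq A - p^2 - q^2.
\]
Next, Proposition \ref{prop:anti} together with $\langle w, v\rangle = 0$ gives $A = \{v, w\} = pq(\eta_v - \eta_w, \omega)$, so the hypothesis $(\eta_w - \eta_v, \omega) < 0$ combined with $p, q > 0$ forces $A > 0$; substituting the other hypothesis $(\eta_w - \eta_v, \eta_w - \eta_v) < 0$ into the master identity yields the crucial bound $pq A < p^2 + q^2$.

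To conclude, assume without loss of generality $p \le q$. Then $p^2 + q^2 \le 2q^2$, so $Ap < 2q$, and since $A > 0$ this forces $|q - Ap| < q$. Because $\sigma(v, w) = (w - A v, v)$ has rank profile $(q - Ap, p)$, we obtain
\[
\mathcal{M}(\sigma(v, w)) = |q - Ap| + p < q + p = \mathcal{M}(v, w).
\]
If instead $p \ge q$, the symmetric argument applied to $\sigma^{-1}(v, w) = (w, v - A w)$ (which has rank profile $(q, p - Aq)$) gives $\mathcal{M}(\sigma^{-1}(v, w)) < \mathcal{M}(v, w)$. The only step requiring insight is spotting the auxiliary vector $u = r(w) v - r(v) w$ and recognising that its self-pairing encodes both the Euler product $A$ and the intersection number $(\eta_v - \eta_w, \eta_v - \eta_w)$; from there the argument is routine algebra.
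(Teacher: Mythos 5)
Your proof is correct and follows essentially the same route as the paper: both arguments use Proposition \ref{prop:anti} to get $A=\langle v,w\rangle>0$ and the identity $p^2q^2(\eta_v-\eta_w,\eta_v-\eta_w)=pqA-p^2-q^2$ (the paper computes the same thing directly as $-\tfrac{1}{p^2q^2}(p^2-Apq+q^2)$) to get $p^2-Apq+q^2>0$, and then conclude that $|q-Ap|<q$ or $|p-Aq|<p$. The only, cosmetic, difference is the endgame: the paper splits the value of the quadratic form $x^2-Axy+y^2$ at $(p,q)$ via its polar form to see that one of $q-\tfrac{A}{2}p$, $p-\tfrac{A}{2}q$ is positive, whereas you reach the same dichotomy by the case distinction $p\le q$ versus $p\ge q$.
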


\begin{proof}
By Proposition \ref{prop:anti}, 
\[
h\stackrel{\textrm{def}}{=}\bl{v}{w}=r(v)r(w)(\eta_v-\eta_w,\omega)>0.
\]
We compute
\begin{equation}
\label{eq:rankreduction}
\begin{aligned}
0>(\eta_w-\eta_v,\eta_w-\eta_v)
&=-\langle w/r(w)-v/r(v),w/r(w)-v/r(w)\rangle\\
&=-\frac{1}{r(v)^2}+\frac{h}{r(v)r(w)}-\frac{1}{r(w)^2}\\
&=-\frac{1}{r(v) r(w)}\big(r(v)^2-hr(v)r(w)+r(w)^2\big)
\end{aligned}
\end{equation}
Consider the quadratic form
$$Q: \Q^2 \mor \Q: (x,y) \fun x^2-hxy+y^2$$
Let $[-,-]$ denote the associated symmetric bilinear form. That is,
$$[--]: \Q^2\times \Q^2: \Q: ((x,y),(a,b)) \mor ax+yb-\frac{h}{2}(ay+xb)$$
Then \eqref{eq:rankreduction} becomes
\begin{equation}
\label{eq:Q}
\begin{aligned}
0<Q(r(v),r(w))&=[(r(v),r(w)),(r(v),r(w))]\\
&= r(v)[(1,0),(r(v),r(w))]+r(w)[(0,1),(r(v),r(w))]
\end{aligned}
\end{equation}
It follows that one of the two terms on the right hand side is strictly positive. Let's first assume this is true for the last one.
Then 
\[
0<[(0,1),(r(v),r(w))]=r(w)-\frac{h}{2}r(v)
\]
which yields $r(w)-hr(v)>-r(w)$. Since we trivially also have $ r(w)-hr(v)<r(w)$, we conclude $\vert r(w)-hr(v)\vert <\vert r(w) \vert $. Hence
\[
\r{M}(\sigma(v,w))= \vert r(w)-hr(v)\vert+\vert r(v) \vert < \vert r(w) \vert +\vert r(v) \vert =\r{M}((v,w))
\]
If it is the first term in the right hand side of \eqref{eq:Q} which is strictly positive then we obtain a similar conclusion but now with $\sigma^{-1}$ instead of $\sigma$.
\end{proof}
We will also use the following variant of this lemma:
\begin{lemmas}\label{rankreduction2}
Let $(v,w)$ be an exceptional pair with $r(v),r(w)>0$.
Assume the following two conditions are satisfied 
\begin{itemize}
\item
$(\eta_w-\eta_v, \eta_w-\eta_v)=0$.
\item
$(\eta_w-\eta_v,\omega)<0$.
\end{itemize}
Then either $\r{M}(\sigma(v,w))<\r{M}(v,w)$ or  $\r{M}(\sigma^{-1}(v,w))<\r{M}(v,w)$ or else $\langle v,w\rangle=2$ and $r(v)=r(w)$.
\end{lemmas}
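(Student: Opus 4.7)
The plan is to imitate the proof of Lemma~\ref{rankreduction}, but to keep careful track of the boundary case that occurs when the sign inequality coming from~$Q$ degenerates to an equality.

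First, I would repeat the opening of the previous proof verbatim. By Proposition~\ref{prop:anti} we have $h \define \langle v,w\rangle = r(v)r(w)(\eta_v - \eta_w,\omega) > 0$, since $(\eta_w-\eta_v,\omega)<0$. Expanding $-\langle w/r(w) - v/r(v),\, w/r(w)-v/r(v)\rangle$ as in \eqref{eq:rankreduction}, the hypothesis $(\eta_w-\eta_v,\eta_w-\eta_v)=0$ becomes the single quadratic relation
\[
r(v)^2 - h\,r(v)r(w) + r(w)^2 = 0.
\]
With the bilinear form $[-,-]$ associated to $Q(x,y)=x^2-hxy+y^2$ from the proof of Lemma~\ref{rankreduction}, this rewrites as
\[
0 = Q(r(v),r(w)) = r(v)\bigl[(1,0),(r(v),r(w))\bigr] + r(w)\bigl[(0,1),(r(v),r(w))\bigr].
\]

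Next I would do a case analysis on the two bracketed quantities, which are $r(v)-\tfrac{h}{2}r(w)$ and $r(w)-\tfrac{h}{2}r(v)$. Since $r(v),r(w)>0$ and the weighted sum vanishes, either both brackets are zero, or they have strictly opposite signs. In the latter subcase the strictly positive bracket feeds into the exact same estimate as in the proof of Lemma~\ref{rankreduction}: for instance, if $r(w)-\tfrac{h}{2}r(v)>0$ then $-r(w)<r(w)-hr(v)<r(w)$ (the right inequality being free from $hr(v)>0$), giving $|r(w)-hr(v)|<r(w)$ and hence $\r{M}(\sigma(v,w))<\r{M}(v,w)$; the symmetric case yields $\r{M}(\sigma^{-1}(v,w))<\r{M}(v,w)$.

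Finally I would handle the degenerate subcase where both brackets vanish. Observe that if only one bracket were zero, the displayed identity would force the other to vanish as well (since $r(v),r(w)>0$), so this really is the only remaining possibility. The system $r(v)=\tfrac{h}{2}r(w)$, $r(w)=\tfrac{h}{2}r(v)$ gives $h^2=4$, and since $h>0$ we get $h=2$ and then $r(v)=r(w)$, which is exactly the exceptional conclusion $\langle v,w\rangle = 2$ with $r(v)=r(w)$. The main (very mild) obstacle is just making sure no fourth case has been missed, which is resolved by the observation that the vanishing of one bracket forces the other in the presence of $r(v),r(w)>0$.
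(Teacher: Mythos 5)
Your proposal is correct and follows essentially the same route as the paper: both rerun the argument of Lemma~\ref{rankreduction} with $Q(r(v),r(w))=0$, note that either one of the two terms $r(v)[(1,0),(r(v),r(w))]$, $r(w)[(0,1),(r(v),r(w))]$ is strictly positive (giving the rank drop as before) or both brackets vanish, and in the latter case deduce $h=2$ and $r(v)=r(w)$. The only cosmetic difference is that you solve the linear system $r(v)=\tfrac{h}{2}r(w)$, $r(w)=\tfrac{h}{2}r(v)$ explicitly, whereas the paper phrases this as $(r(v),r(w))$ lying in the radical of the degenerate form $Q$.
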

\begin{proof} We keep the notation of the proof of lemma \ref{rankreduction}. We now have to deal with the additional possibility that the two expressions $[(1,0),(r(v),r(w))]$ and $[(0,1),(r(v),r(w))]$ are zero. This can only happen
when $Q$ is degenerate. I.e. when $h=\langle v,w\rangle=2$. In that case $(r(v),r(w))$ is in the radical of $Q$ which implies $r(v)=r(w)$.
\end{proof}

\subsection{Some auxillary results on plane geometry}
In this section we state some adhoc results which will be used below. 
\begin{lemmas}
\label{lem:indefinite}
Assume that $(-,-)$ is a non-degenerate symmetric bilinear form on a two-dimensional real vector space $H$.
Assume that there are vectors $(T_i)_{i\in \Z}\in H$ satisfying
\begin{enumerate}
\item $T_{i+4}=T_i$.
\item \label{x2} $(T_i,T_{i+2})=0$.
\item \label{x3} $(T_i,T_{i+1})>0$.
\end{enumerate}
Then $(-,-)$ is indefinite.
\end{lemmas}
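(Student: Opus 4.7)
The plan is to argue by contradiction: assume $(-,-)$ is definite and derive a contradiction from the orthogonality relation $(T_1,T_3)=0$. By negating the form if necessary (which preserves conditions \eqref{x2} and \eqref{x3} only if we are careful), note actually both positive and negative definite cases need to be handled separately, but essentially symmetrically, so I would first do the positive definite case in detail.

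First I would observe that if the form is definite then no $T_i$ can be zero: indeed condition \eqref{x3} gives $(T_i,T_{i+1})>0$, which forces both vectors to be nonzero. Combined with the orthogonality relation \eqref{x2}, definiteness implies that $\{T_0,T_2\}$ is an orthogonal basis of $H$, and similarly $\{T_1,T_3\}$ is an orthogonal basis.

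Next I would expand $T_1$ and $T_3$ in the basis $\{T_0,T_2\}$, writing
\[
T_1 = \alpha T_0 + \beta T_2, \qquad T_3 = \gamma T_0 + \delta T_2.
\]
Using the positivity in \eqref{x3}, I compute $(T_0,T_1) = \alpha(T_0,T_0) > 0$ and $(T_2,T_1)=\beta(T_2,T_2)>0$; similarly $(T_0,T_3)=(T_3,T_0)=\gamma(T_0,T_0)>0$ and $(T_2,T_3)=\delta(T_2,T_2)>0$. Here I am using that $T_{-1}=T_3$ by periodicity so $(T_3,T_0)=(T_{-1},T_0)>0$. In the positive definite case $(T_0,T_0)>0$ and $(T_2,T_2)>0$, so all of $\alpha,\beta,\gamma,\delta>0$. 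The contradiction now comes from
\[
0 = (T_1,T_3) = \alpha\gamma\,(T_0,T_0) + \beta\delta\,(T_2,T_2),
\]
whose right-hand side is strictly positive.

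In the negative definite case the identical expansion works: now $(T_i,T_i)<0$ forces all of $\alpha,\beta,\gamma,\delta<0$, hence $\alpha\gamma,\beta\delta>0$, and the same displayed identity reads $0=(T_1,T_3)=\alpha\gamma(T_0,T_0)+\beta\delta(T_2,T_2)$ which is now strictly negative, again a contradiction. I do not foresee any real obstacle; the only subtlety is making sure to invoke the periodicity $T_{-1}=T_3$ so that \eqref{x3} gives the needed inequality $(T_3,T_0)>0$, and to handle both signs of definiteness separately (since negating $(-,-)$ would flip the sign in \eqref{x3}).
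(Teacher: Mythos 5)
Your proof is correct and is essentially the coordinate version of the paper's argument: the paper observes that in the positive definite case $T_2$ and $T_4$ lie strictly inside the quadrant spanned by the orthogonal pair $T_1,T_3$ and hence cannot themselves be orthogonal, which is exactly your computation that all four expansion coefficients have the same sign, forcing $(T_1,T_3)=\alpha\gamma(T_0,T_0)+\beta\delta(T_2,T_2)\neq 0$. The only cosmetic difference is that the paper reduces the negative definite case to the positive one by negating the form and flipping the signs of the odd-indexed $T_i$, whereas you handle that case directly.
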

\begin{proof}
We argue by contradiction. Note that $T_i\neq 0$ for all $i$ by \eqref{x3}. Assume first that 
$(-,-)$ is positive definite. Since by \eqref{x3} we have $(T_1,T_2)>0$, $(T_3,T_2)>0$, $(T_1,T_4)=(T_5,T_4)>0$, $(T_3,T_4)>0$ we see that $T_2$, $T_4$
are both in the interior of the quadrant spanned by the orthogonal vectors $T_1,T_3$. But then it is 
clear that the $T_2,T_4$ cannot be orthogonal among themselves, contradicting the hypothesis.

Now assume that $(-,-)$ is negative definite. Put $[-,-]=-(-,-)$. Then $[-,-]$ is positive
definite but $[T_i,T_{i+1}]<0$. We fix this by replacing $(T_i)_i$ by $(T'_i)_i$ with
\[
T'_{i}
=
\begin{cases}
T_i&\text{if $i$ is even}\\
-T_i&\text{if $i$ is odd}
\end{cases}
\]
Now we argue as above with $[-,-]$ replacing $(-,-)$.
\end{proof}

\begin{lemmas}
\label{lem:sequence}
Assume that $(-,-)$ is a non-degenerate symmetric bilinear form on a two-dimensional real vector space $H$.
Assume that there are vectors $\omega,(T_i)_{i\in \Z}\in H$ satisfying
\begin{enumerate}
\item \label{c1} $T_{i+4}=T_i$.
\item \label{c2} $\omega\neq 0$.
\item  \label{c3}
$(T_i,T_{i+2})=0$.
\item \label{c4}
$(T_i,T_{i+1})>0$.
\item \label{c5}
$(T_i,T_i)\ge 0\Rightarrow (T_i,\omega)<0$.
\end{enumerate}
Then one of the following is true
\begin{enumerate}
\item There exists an $i$ such that $(T_i,T_i)=(T_{i+2},T_{i+2})=0$ (and hence by \eqref{c5}: $(T_i,\omega)<0$, $(T_{i+2},\omega)<0$).
\item There exists an $i$ such that $(T_i,T_i)<0$ and $(T_i,\omega)<0$. 
\end{enumerate}
\end{lemmas}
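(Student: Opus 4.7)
The plan is to invoke Lemma \ref{lem:indefinite} to pass immediately to the case where $(-,-)$ has signature $(1,1)$, and then to exploit two standard consequences of that signature: the orthogonal complement of a non-isotropic vector is one-dimensional of opposite-sign norm, while the orthogonal complement of a nonzero null vector is the line it spans.

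I would first dispose of the case in which some $T_i$ is null. A cyclic relabelling lets me assume $i=1$; condition \eqref{c3} then forces $T_3 \in T_1^\perp = \mathbb{R} T_1$, and $T_3 \neq 0$ by \eqref{c4}, so $T_3$ is a nonzero scalar multiple of $T_1$ and hence also null. This produces outcome (1) at once.

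Assume from now on that every $(T_i,T_i)$ is nonzero. From \eqref{c3}, $(T_i,T_i)$ and $(T_{i+2},T_{i+2})$ must have opposite signs, so exactly one of each pair $\{T_1,T_3\}$ and $\{T_2,T_4\}$ has positive norm; the two positives must therefore occupy cyclically adjacent indices, and a further cyclic shift lets me assume they are $T_1,T_2$. Condition \eqref{c5} then yields $(T_1,\omega),(T_2,\omega)<0$, reducing the problem to showing that one of $(T_3,\omega),(T_4,\omega)$ is also negative. The pair $T_1,T_2$ forms a basis of $H$ (indeed $T_2=\lambda T_1$ together with $(T_2,T_4)=0$ and $(T_1,T_4)>0$ would force $\lambda=0$), so I write $\omega = \alpha T_1 + \beta T_2$. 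The orthogonalities $(T_3,T_1)=(T_4,T_2)=0$ collapse the expansions to $(T_3,\omega) = \beta(T_3,T_2)$ and $(T_4,\omega) = \alpha(T_4,T_1)$, with both coefficients strictly positive by \eqref{c4}. Meanwhile $(T_1,\omega) = \alpha(T_1,T_1)+\beta(T_1,T_2)$ also has strictly positive coefficients, so its negativity forces at least one of $\alpha,\beta$ to be negative, which in turn forces the corresponding $T_3$ or $T_4$ to realize outcome (2).

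The main obstacle is not conceptual but bookkeeping: verifying that each cyclic reindexing genuinely preserves all five hypotheses \eqref{c1}--\eqref{c5}, and checking the small rigidity claim that makes $T_1,T_2$ a basis. Once the signature $(1,1)$ reduction and the cyclic symmetry are in hand, the rest is a short sign chase.
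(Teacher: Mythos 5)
Your proof is correct and follows essentially the same route as the paper's: both rest on Lemma \ref{lem:indefinite} to make the form indefinite, reduce after cyclic shifts to the sign configuration $(T_1,T_1),(T_2,T_2)>0$ and $(T_3,T_3),(T_4,T_4)<0$ with $(T_1,\omega)<0$, and finish with a two-dimensional sign chase. The only differences are cosmetic: the paper argues by contradiction and expands $T_4$ in the basis $\{T_1,T_3\}$, whereas you argue directly and expand $\omega$ in the basis $\{T_1,T_2\}$; and your treatment of the isotropic case (a null $T_i$ forces $T_{i+2}$ onto the same null line, giving conclusion (1) outright) is the contrapositive of the paper's observation that denying conclusion (1) leaves a non-null vector in each parity class.
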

\begin{proof}
By Lemma \ref{lem:indefinite} we know that $(-,-)$ must be indefinite.  Moreover if (\ref{c1}-\ref{c5}) hold for $T_1,T_2,T_3,T_4$ then they also hold for
$T_1,T_4,T_3,T_2$. Note also that \eqref{c4} implies that $T_i\neq 0$ for all $i$.

Assume the conclusion of the lemma is false. Then by \eqref{c5} we have for all $i$:
\begin{equation}
\label{blah}
(T_i,T_i)\ge 0\text{ or }(T_i,\omega)\ge 0\text{ but not both }
\end{equation}
and moreover there is at least one even and one odd $i$ for which $(T_i,T_i)\neq 0$. We will obtain a contradiction.
By shifting $(T_i)_i$ we may assume that either
$(T_1,T_1)>0$ or $(T_3,T_3)<0$. Since $(T_1,T_3)=0$ and $(-,-)$ is indefinite and non-degenerate we obtain
$(T_1,T_1)> 0$ and $(T_3,T_3)< 0$ and moreover $\{T_1,T_3\}$ forms a basis for $H$.
A similar reasoning for $T_2,T_4$ (possibly after exchanging them) yields $(T_2,T_2)>0$, $(T_4,T_4)<0$.
Hence by \eqref{blah} we obtain $(T_1,\omega)<0$, $(T_3,\omega)\ge 0$, $(T_4,\omega)\ge 0$.

Write 
$T_4=\gamma T_1+\delta T_3$. 
Expressing 
$(T_1,T_4)=(T_5,T_4)>0$, $(T_3,T_4)>0$ yields
$\gamma>0$, $\delta<0$.
Applying $(-,\omega)$ to
$T_4=\gamma T_1+\delta T_3$ yields a contradiction.
\end{proof}
\subsection{Minimal forms for exceptional bases}
We now let $(e_i)_{i\in\Z}$ be the helix associated to the exceptional basis $E$ as in \S\ref{sec:exmut}. 
To simplify notation we write $r_i=r(e_i)$, $\eta_i=\eta_{e_i}$, etc\dots.
Note that $r_{4+i}=r_i$ as $e_{i+4}=s^{-1}e_i$. \label{perlingperling}
\begin{lemmas} \label{lem:cases}
 By acting through an element of  $\Sigma B_4$ we may assume that one of the following conditions holds
\begin{align*}
\text{Case 1:}& &r_1&=0\\
\text{Case 2:}& &\langle e_1,e_2\rangle =2&\text{ and }\langle e_3,e_4\rangle=2
\end{align*}
\end{lemmas}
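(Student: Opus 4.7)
The plan is to use the reduction Lemmas~\ref{rankreduction} and \ref{rankreduction2} to minimise the complexity
\[
\r{M}(E) = \sum_{i=1}^{4} \lvert r(e_i)\rvert
\]
over the $\Sigma B_4$-orbit of $E$, and then read off the structure of a minimiser with the help of the plane-geometry Lemma~\ref{lem:sequence}. After fixing a structure element ${{o}}$, the sign-flips $\epsilon_i$ let us assume $r_i := r(e_i) \geq 0$ for every index $i$; if some $r_i = 0$, a cyclic shift $\rho^{-(i-1)} \in B_4$ (where $\rho = \sigma_1\sigma_2\sigma_3$) brings this zero rank into position $1$, placing us in Case~1. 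Otherwise all $r_i > 0$ throughout the subsequent minimisation, since both $\rho$ and the mutations $\sigma_i^{\pm 1}$ preserve non-negativity of ranks up to further sign-flips.

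At a minimum in which all $r_i > 0$, we apply Lemma~\ref{lem:sequence} to the $4$-periodic family of vectors $T_i := \eta_{i+1} - \eta_i \in \Num_\Q(K)$ (extended to the reals). The periodicity $T_{i+4} = T_i$ follows from $\eta_{i+4} = \eta_i - \omega$, which is a consequence of $e_{i+4} = s^{-1}e_i$ together with the fact that the induced action of $s$ on $\Num_\Q(K)$ is trivial; $\omega \neq 0$ because $\delta(K) \neq 0$ (Lemma~\ref{lem:deltawd}). The remaining hypotheses rest on the identity $\bl{u_i}{u_j} = (\eta_i - \eta_j, \omega)$ for $i < j$ belonging to a common thread of the helix, where $u_i := e_i/r_i$; this follows from Proposition~\ref{prop:anti} combined with $\bl{e_j}{e_i} = 0$. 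A direct telescoping expansion then yields
\begin{align*}
(T_i,T_{i+2}) &= 0,\\
(T_i,T_{i+1}) &= 1/r_{i+1}^2,\\
(T_i,T_i) + (T_i,\omega) &= -1/r_i^2 - 1/r_{i+1}^2.
\end{align*}
The third identity makes condition~(5) of Lemma~\ref{lem:sequence} automatic: whenever $(T_i,T_i) \geq 0$, we obtain $(T_i,\omega) < 0$.

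The dichotomy given by Lemma~\ref{lem:sequence} then unfolds as follows. In the first outcome, some index $i$ satisfies $(T_i,T_i) < 0$ and $(T_i,\omega) < 0$; after shifting the helix so that $(e_i, e_{i+1})$ is positioned at the start, the hypotheses of Lemma~\ref{rankreduction} are met and $\r{M}$ strictly decreases, contradicting minimality. In the second outcome there is an index $i$ with $(T_i,T_i) = (T_{i+2},T_{i+2}) = 0$ and, by condition~(5), $(T_i,\omega), (T_{i+2},\omega) < 0$; we then apply Lemma~\ref{rankreduction2} to each of the pairs $(e_i,e_{i+1})$ and $(e_{i+2},e_{i+3})$. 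Minimality rules out further reductions, so in both cases we land in the exceptional subcase of that lemma, forcing $\bl{e_i}{e_{i+1}} = 2 = \bl{e_{i+2}}{e_{i+3}}$. The shift $\rho^{-(i-1)}$ then produces Case~2.

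I expect the main technical step to be the verification of the three identities above. The ``$0$'' in the first comes from a symmetric cancellation of four $\omega$-pairings, while the $1/r_{i+1}^2$ in the second and the $-1/r_i^2 - 1/r_{i+1}^2$ in the third trace back to the normalisations $\bl{u_i}{u_i} = 1/r_i^2$. For the wrap-around index $i = 4$ one must exploit two overlapping threads $(e_1,e_2,e_3,e_4)$ and $(e_2,e_3,e_4,e_5)$ of the helix; once this is observed, no new ingredient is required.
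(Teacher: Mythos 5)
Your proposal is correct and follows essentially the same route as the paper: minimise $\r{M}$ over the $\Sigma B_4$-orbit, dispose of a zero rank by rotation (Case 1), and otherwise feed the vectors $T_i=\eta_{i+1}-\eta_i$ into Lemma~\ref{lem:sequence}, using Lemmas~\ref{rankreduction} and~\ref{rankreduction2} to exclude everything except the subcase $\bl{e_1}{e_2}=\bl{e_3}{e_4}=2$. Your three identities, including $(T_i,T_i)+(T_i,\omega)=-1/r_i^2-1/r_{i+1}^2$, are exactly the computations the paper performs (it phrases the last one as $\omega=-(T_i+T_{i+1}+T_{i+2}+T_{i+3})$, which is equivalent via $\eta_{i+4}-\eta_i=-\omega$).
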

\begin{proof} Since $\r{M}$ takes values in $\d{N}$, we may replace $E$ by a basis in its $\Sigma B_4$-orbit
  such that $\mathcal{M}(E)$ is minimal. If there exists an $i$ such
  that $r_i=0$ then we are done (after applying a rotation $\rho^{i-1}$ to $E$, see \S\ref{sec:exmut}). So we assume
  $r_i\neq 0$ for all $i$.  Applying appropriate sign changes $\epsilon_i$ we may assume $r_i> 0$ for all $i$. 
Put $T_i=\eta_{i+1}-\eta_i$. 
We verify the conditions for Lemma \ref{lem:sequence} on $H\define \Num_\d{R}(K)$
\begin{enumerate}
\item \label{cond10} $T_{i+4}=T_i$. This follows from the fact that $\tilde{\eta}_{i+5}-\tilde{\eta}_{i+4}
= e_{i+5}/r_{i+5}-e_{i+4}/r_{i+4}=s^{-1}(e_{i+1}/r_{i+1}-e_i/r_i)=\tilde{\eta}_{i+1}-\tilde{\eta}_i$.
\item \label{cond11} $\omega\neq 0$. This follows from the fact that $K$ is of surface* type by Lemma
\ref{lem:deltawd}.
\item \label{cond1} $(T_i,T_{i+2})=0$. We have
\begin{align*}
( T_i,T_{i+2})= (T_{i+2},T_i)&=(\eta_{i+3}-\eta_{i+2},\eta_{i+1}-\eta_i)\\
=&\left(\frac{e_{i+3}}{r_{i+3}}-\frac{e_{i+2}}{r_{i+2}},\frac{e_{i+1}}{r_{i+1}}-\frac{e_{i}}{r_{i}}\right)\\
=&0.
\end{align*}
\item \label{cond2} $(T_i,T_{i+1})>0$. A similar computation as in $(3)$ shows in fact that $(T_i,T_{i+1})=1/r^2_{i+1}$
\item \label{cond3} $(T_i,T_i)\ge 0\Rightarrow (T_i,\omega)<0$. The sum $T_i+T_{i+1}+T_{i+2}+T_{i+3}$ is equal
to $\eta_{i+4}-\eta_i$ and one computes 
\begin{align*}
\tilde{\eta}_{i+4}-\tilde{\eta}_i&=e_{i+4}/r_{i+4}-e_i/r_i \\
&=(s^{-1}-1)(e_i)/r_i=s^{-1}(s-1)(\tilde{\eta}_i+o)=s^{-1}(s-1)(\tilde{\eta}_i)+s^{-1}(s-1)(o)
\end{align*}
Hence modulo $F^2K_{\mathbb{R}}$:
\[
\eta_{i+4}-\eta_i=-s^{-1}(s-1)(o)=-\omega
\]
Hence $\omega=-(T_i+T_{i+1}+T_{i+2}+T_{i+3})$ and thus $(\omega,T_i)=-(T_i,T_i)-(T_i,T_{i+1})-(T_i,T_{i-1})$. It now suffices to apply (\ref{cond1}),(\ref{cond2}).
\end{enumerate}
From Lemma \ref{lem:sequence} we deduce that after rotating, one of the following conditions holds.
\begin{enumerate}
\item \label{z1} $(T_1,T_1)=(T_{3},T_{3})=0$, $(T_1,\omega)<0$ and $(T_3,\omega)<0$.
\item \label{z2} $(T_1,T_1)<0$ and $(T_1,\omega)<0$. 
\end{enumerate}
However \eqref{z2} contradicts the minimality $E$ using Lemma \ref{rankreduction}. Similarly if $\langle e_1,e_2\rangle\neq 2$ or
$\langle e_3,e_4\rangle \neq 2$ then \eqref{z1} also contradicts the minimality of $E$, using Lemma \ref{rankreduction2}.
Hence we are done.
\end{proof}
\subsection{Case 1}
Now we discuss the two minimal cases exhibited in Lemma \ref{lem:cases} individually. We keep the same notations. We first assume $r_1=0$. This is equivalent to $e_1 \in F^1(K)$. Applying Lemma \ref{lem:blowdown}, we conclude that $K$ is obtained by numerical blowup of the sublattice $K'={}^\perp e_1=\langle e_2,e_3,e_4\rangle$ at 
some $z\in F^2 K'$. Since $\rk K'=3$ we have $K'=K(\mathbb{P}^2)$ as explained in the introduction. 
It follows that $(K,(e_i)_i)\overset{\Sigma B_3}{\cong} K_n$ for $n\ge 0$ by 
\S\ref{sec:blowupP2}. 
\subsection{Case 2}
Now we assume $\langle e_1,e_2\rangle=2$, $\langle e_3,e_4\rangle=2$.
This means the Gram matrix has the form:
\[
M=
\begin{bmatrix}
1&2&b&c\\
0&1&d&e\\
0&0&1&2\\
0&0&0&1
\end{bmatrix}
\]
By possibly changing the sign of $e_1$, $e_2$ we may assume that $b\ge d$.
Substituting $a=f=2$ in  \eqref{eq:rk4new} we obtain
\begin{equation}
\label{eq:system}
\begin{aligned}
b^2 - 2bc - 2bd + c^2 + 4cd - 2ce + d^2 - 2de + e^2 + 8&=0\\
-be + cd + 4&=0
\end{aligned}
\end{equation}
Denoting the lefthand sides by $q_1$ and $q_2$ we find
\[
q_1-2q_2=(b - c - d + e)^2
\]
so that \eqref{eq:system} is equivalent to
\begin{align*}
-be + cd + 4&=0\\
b-c-d+e&=0
\end{align*}
Since it's easy to see that $(b,c,d,e)$ is a solution to this system if and only if $(b+t,c+t,d+t,e+t)$ is, we will classify the solutions assuming $d=0$. Then we must solve
\begin{align*}
be &=4\\
c&=b+e
\end{align*}
The solutions to this system are (taking into account $b\ge d=0$) 
\[
\begin{array}{|c c c|}\hline
b&c&e\\
\hline
2&4&2\\
\hline
1&5&4\\
\hline
4&5&1\\\hline
\end{array}
\]
We discuss these separately. For the solution $(2,4,2)$ one has $M=M_t$ with
\[
M_t=
\begin{bmatrix}
1&2&2+t&4+t\\
0&1&t&2+t\\
0&0&1&2\\
0&0&0&1
\end{bmatrix}
\]
One checks $ (\epsilon_1\sigma_1)(M_t)=M_{t+2} $ so that there are at 
most two orbits, respectively with representatives $M_0$ and 
$M_1$. The case $M_0$ corresonds to $\mathbb{P}^1\times
\mathbb{P}^1$ with its standard exceptional collection $\mathcal{O}(0,0),\mathcal{O}(1,0),\mathcal{O}(0,1),\mathcal{O}(1,1)$. It will be convenient to denote 
this lattice by $K'_1$.
So $(K,(e_i)_i)\overset{\Sigma B_3}{\cong} K'_1$.

 For the solution $M_1$ we note
\[
\epsilon_2\epsilon_4\sigma_1^{-1}\sigma_3 \sigma_2
\begin{bmatrix}
1&2&3&5\\
0&1&1&3\\
0&0&1&2\\
0&0&0&1
\end{bmatrix}
=
\begin{bmatrix}
1&1&2&1\\
0&1&3&3\\
0&0&1&3\\
0&0&0&1
\end{bmatrix}
\]
Hence $(K,(e_i)_i)\overset{\Sigma B_4}{\cong} K_1$.

\medskip

For the solution $(1,5,4)$ we have
\[
M_t=
\begin{bmatrix}
1&2&1+t&5+t\\
0&1&t&4+t\\
0&0&1&2\\
0&0&0&1
\end{bmatrix}
\]
and one checks that
$
(\epsilon_1\sigma_1)(M_t)=M_{t+1} 
$.
It follows that there is only a single orbit with representative $M_0$. We have
\begin{equation}
\label{eq:single}
\epsilon_4\sigma_1\sigma^{-1}_2\sigma_1\sigma_2
\begin{bmatrix}
1&2&1&5\\
0&1&0&4\\
0&0&1&2\\
0&0&0&1
\end{bmatrix}=
\begin{bmatrix}
1&2&4&2\\
0&1&3&3\\
0&0&1&3\\
0&0&0&1
\end{bmatrix}
\end{equation}
Hence in this case $(K,(e_i)_i)\overset{\Sigma B_3}{\cong} K_2$.

Finally for the solution $(4,5,1)$ we have
\[
M_t=
\begin{bmatrix}
1&2&4+t&5+t\\
0&1&t&1+t\\
0&0&1&2\\
0&0&0&1
\end{bmatrix}
\]
One checks
$
(\epsilon_3\sigma_2)^{-1}M_i=M_{i+1} 
$.
It follows  that there is again only a single orbit with representative $M_0$. This time we have
\[
\epsilon_1\epsilon_2\sigma_2\sigma_1\sigma_2\sigma_2\sigma_3^{-1}\sigma_2
\begin{bmatrix}
1&2&4&5\\
0&1&0&1\\
0&0&1&2\\
0&0&0&1
\end{bmatrix}
=
\begin{bmatrix}
1&2&4&2\\
0&1&3&3\\
0&0&1&3\\
0&0&0&1
\end{bmatrix}
\]
Hence also in this case $(K,(e_i)_i)\overset{\Sigma B_3}{\cong} K_2$.
\subsection{All solutions are different}
We have $\delta(K')=\delta(\mathbb{P}^1\times \mathbb{P}^1)=8$ and by
\S\ref{sec:blowupP2} $\delta(K_n)=9-n^2$. Hence the only possible non-trivial equivalence 
is between $K'_1$ and $K_1$ (the latter corresponds to $\mathbb{F}_1$).
One way to distinguish $K'_1$ and $K_1$ is to verify that $s\equiv 1\mod 2$
in the first case and $s\not\equiv 1\mod 2$ in the second case.

\addcontentsline{toc}{section}{References}


\providecommand{\bysame}{\leavevmode\hbox to3em{\hrulefill}\thinspace}
\providecommand{\MR}{\relax\ifhmode\unskip\space\fi MR }
\providecommand{\MRhref}[2]{%
  \href{http://www.ams.org/mathscinet-getitem?mr=#1}{#2}
}
\providecommand{\href}[2]{#2}

\end{document}